\def\proofof#1{{ \medbreak \noindent {\bf Proof of #1.} }}
\newcommand{\floor}[1]{\lfloor {#1} \rfloor}
\newcommand{\de}{\partial}
\newcommand{\old}[1]{}
\newcommand{\set}[1]{\left\{#1\right\}}
\newcommand{\abs}[1]{\left\vert#1\right\vert}
\newcommand{\vol}{\rm{vol}\,}
\newcommand{\e}{\epsilon}
\newcommand{\arxiv}[1]{{\tt \href{http://arxiv.org/abs/#1}{arXiv:#1}}}
\newtheorem{theorem}{Theorem}[section]
\newtheorem{prop}[theorem]{Proposition}
\newtheorem{lemma}[theorem]{Lemma}
\theoremstyle{remark}
\newtheorem*{remark}{Remark}
\theoremstyle{definition}
\def\Var{{\mathrm{Var}}}
\def\Cov{{\mathrm{Cov}}}
\def\Grid{\mathcal{G}}
\def \eps {\varepsilon}
\def\Re{\mathrm{Re}}
\def\basis{\mathbf{e}}
\def\B{\mathbf{B}} 
\def\f{\varphi}
\title{Internal DLA and the Gaussian free field}
\author{David Jerison \and Lionel Levine\footnote{Supported by an NSF Postdoctoral Research Fellowship.} \and Scott Sheffield\footnote{Partially supported by NSF grant
DMS-0645585.}}
\date{January 31, 2011}
\DeclareSymbolFont{AMSb}{U}{msb}{m}{n}
\DeclareMathSymbol{\C}{\mathbin}{AMSb}{"43}
\DeclareMathSymbol{\EE}{\mathbin}{AMSb}{"45}
\DeclareMathSymbol{\N}{\mathbin}{AMSb}{"4E}
\DeclareMathSymbol{\PP}{\mathbin}{AMSb}{"50}
\DeclareMathSymbol{\Q}{\mathbin}{AMSb}{"51}
\DeclareMathSymbol{\R}{\mathbin}{AMSb}{"52}
\DeclareMathSymbol{\Z}{\mathbin}{AMSb}{"5A}
\begin{document}

\maketitle
\renewcommand{\thefootnote}{}
\renewcommand{\thefootnote}{\arabic{footnote}}

\begin{abstract}
In previous works, we showed that the internal DLA cluster on $\Z^d$ with $t$ particles is almost surely spherical up to a maximal error of $O(\log t)$ if $d=2$ and $O(\sqrt{\log t})$ if $d \geq 3$.  This paper addresses ``average error'': in a certain sense, the average deviation of internal DLA from its mean shape is of {\em constant} order when $d=2$ and of order $r^{1-d/2}$ (for a radius $r$ cluster) in general.  Appropriately normalized, the fluctuations (taken over time and space) scale to a variant of the Gaussian free field.
\end{abstract}

\pagebreak
\tableofcontents

\section{Introduction}

\subsection{Overview}
We study scaling limits of \emph{internal diffusion limited aggregation} (``internal DLA''), a growth model introduced in \cite{MD,DF}.
In internal DLA, one inductively constructs an {\bf occupied set} $A_t \subset \Z^d$ for each time $t \geq 0$ as follows: begin with $A_0 = \emptyset$ and $A_1 = \{0\}$, and let $A_{t+1}$ be the union of $A_t$ and the first place a random walk from the origin hits $\Z^d \setminus A_t$.

The purpose of this paper is to study the growing family of sets $A_t$.
Following the pioneering work of \cite{LBG}, it is by now well known that, for large $t$, the set $A_t$ approximates an origin-centered Euclidean lattice ball $\B_r:= B_r(0) \cap \Z^d$ (where $r=r(t)$ is such that $B_r(0)$ has volume $t$).  The authors recently showed that this is true in a fairly strong sense \cite{JLS0, JLS, JLS2}: the maximal distance from a point where $1_{A_t} - 1_{\B_r}$ is non-zero to $\partial B_r(0)$ is a.s.\ $O(\log t)$ if $d=2$ and $O(\sqrt{\log t})$ if $d \geq 3$.  In fact, if $C$ is large enough, the probability that this maximal distance exceeds $C\log t$ (or $C \sqrt{\log t}$ when $d \geq 3$) decays faster than any fixed (negative) power of $t$.  Some of these results are obtained by different methods in \cite{AG,AG2}.

This paper will ask what happens if, instead of considering the maximal distance from $\partial B_r(0)$ at time $t$, we consider the ``average error'' at time $t$ (allowing inner and outer errors to cancel each other out).  It turns out that in a distributional ``average fluctuation'' sense, the set $A_t$ deviates from $B_r(0)$ by only a constant number of lattice spaces when $d = 2$ and by an even smaller amount when $d \geq 3$.  Appropriately normalized, the fluctuations of $A_t$, taken over time and space, define a distribution on $\R^d$ that converges in law to a variant of the Gaussian free field (GFF): a random distribution on $\R^d$ that we will call the {\bf augmented Gaussian free field}.  (It can be constructed by defining the GFF in spherical coordinates and replacing variances associated to spherical harmonics of degree $\ell$ by variances associated to spherical harmonics of degree $\ell + 1$; see \textsection\ref{ss.augmentedGFF}.)  The ``augmentation'' appears to be related to a damping effect produced by the mean curvature of the sphere (as discussed below).\footnote{Consider continuous time internal DLA on the half cylinder $(\Z/m\Z)^{d-1} \times \Z_+$, with particles started uniformly on $(\Z/m\Z)^{d-1} \times \{0\}$.  Though we do not prove this here, we expect the cluster boundaries to be approximately flat cross-sections of the cylinder, and we expect the fluctuations to scale to the {\em ordinary} GFF on the half cylinder as $m \to \infty$.}

To our knowledge, no central limit theorem of this kind has been previously conjectured in either the physics or the mathematics literature.
The appearance of the GFF and its ``augmented'' variants is a particular surprise.  (It implies that internal DLA fluctuations --- although very small --- have long-range correlations and that, up to the curvature-related augmentation, the fluctuations in the direction transverse to the boundary of the cluster are of a similar nature to those in the tangential directions.)
  Nonetheless, the heuristic idea is easy to explain.  Before we state the central limit theorems precisely (\textsection\ref{ss.twostatement} and \textsection\ref{ss.generalstatement}), let us explain the intuition behind them.

Write a point $x \in \mathbb R^d$ in polar coordinates as $r \theta$ for $r \geq 0$ and $\theta$ on the unit sphere.  Suppose that at each time $t$ the boundary of $A_t$ is approximately parameterized by $r_t(\theta) \theta$ for a function $r_t$ defined on the unit sphere.  Write
	\[ r_t(\theta) = (t/\omega_d)^{1/d} + \rho_t(\theta) \]
where $\omega_d$ is the volume of the unit ball in $\R^d$.  The $\rho_t(\theta)$ term measures the deviation from circularity  of the cluster $A_t$ in the direction $\theta$.  How do we expect $\rho_t$ to evolve in time?  To a first approximation, the angle at which a random walk exits $A_t$ is a uniform point on the unit sphere.  If we run many such random walks, we obtain a sort of Poisson point process on the sphere, which has a scaling limit given by space-time white noise on the sphere.  However, there is a smoothing effect (familiar to those who have studied the continuum analog of internal DLA: the famous Hele-Shaw model for fluid insertion, see the reference text \cite{GV}) coming from the fact that places where $\rho_t$ is small are more likely to be hit by the random walks, hence more likely to grow in time.  There is also secondary damping effect coming from the mean curvature of the sphere, which implies that even if (after a certain time) particles began to hit all angles with equal probability, the magnitude of $\rho_t$ would shrink as $t$ increased and the existing fluctuations were averaged over larger spheres.

The white noise should correspond to adding independent Brownian noise terms to the spherical Fourier modes of $\rho_t$.  The rate of smoothing/damping in time should be approximately given by $\Lambda \rho_t$ for some linear operator $\Lambda$ mapping the space of functions on the unit sphere to itself.  Since the random walks approximate Brownian motion (which is rotationally invariant), we would expect $\Lambda$ to commute with orthogonal rotations, and hence have spherical harmonics as eigenfunctions.  With the right normalization and parameterization, it is therefore natural to expect the spherical Fourier modes of $\rho_t$ to evolve as independent Brownian motions subject to linear ``restoration forces'' (a.k.a.\ Ornstein-Uhlenbeck processes) where the magnitude of the restoration force depends on the degree of the corresponding spherical harmonic.  It turns out that the restriction of the (ordinary or augmented) GFF on $\R^d$ to a centered volume $t$ sphere evolves in time $t$ in a similar way.

Of course, as stated above, the ``spherical Fourier modes of $\rho_t$'' have not really been defined (since the boundary of $A_t$ is complicated and generally {\em cannot} be parameterized by $r_t(\theta)\theta$).  In the coming sections, we will define related quantities that (in some sense) encode these spherical Fourier modes and are easy to work with.  These quantities are the martingales obtained by summing discrete harmonic polynomials over the cluster $A_t$.

The heuristic just described provides intuitive interpretations of the results given below.  Theorem~\ref{t.fluctuations}, for instance, identifies the weak limit as $t \to \infty$ of the internal DLA fluctuations from circularity at a fixed time $t$: the limit is the two-dimensional augmented Gaussian free field restricted to the unit circle $\partial B_1(0)$, which can be interpreted in a distributional sense as the random Fourier series
\begin{equation}
\label{eq:fourier}
\frac{1}{\sqrt{2\pi}}\left[\alpha_0/\sqrt2 + \sum_{k=1}^\infty  \alpha_k
\frac{\cos k\theta} {\sqrt{k+1}}
+
\beta_k\frac{\sin k\theta} {\sqrt{k+1}} \right]
\end{equation}
where $\alpha_k$ for $k\geq 0$ and $\beta_k$ for $k\geq 1$ are independent standard Gaussians.  The ordinary two-dimensional GFF restricted to the unit circle is similar, except that $\sqrt{k +1 }$ is replaced by $\sqrt k$.

The series \eqref{eq:fourier} --- unlike its counterpart for the one-dimensional Gaussian free field, which is a variant of Brownian bridge --- is a.s.\ divergent, which is why we use the dual formulation explained in \textsection\ref{ss.generalstatement}.  The dual formulation of \eqref{eq:fourier} amounts to a central limit theorem, saying that for each $k\geq 1$ the real and imaginary parts of
 	\[ M_k = \frac1r \sum_{z \in A_{\pi r^2}} \left( \frac{z}{r}\right)^k \]
converge in law as $r \to \infty$ to normal random variables with variance $\frac{\pi}{2(k+1)}$ (and that $M_j$ and $M_k$ are asymptotically uncorrelated for $j\neq k$).  See \cite[\textsection6.2]{FL} for numerical data on the moments $M_k$ in large simulations.

\subsection{FKG inequality statement and continuous time}

Before we set about formulating our central limit theorems precisely, we mention a previously overlooked fact.  Suppose that we run internal DLA in continuous time by adding particles at Poisson random times instead of at integer times: this process we will denote by $A_{T(t)}$ (or often just $A_T$) where $T(t)$ is the counting function for a Poisson point process in the interval $[0,t]$ (so $T(t)$ is Poisson distributed with mean~$t$).  We then view the entire history of the IDLA growth process as a (random) function on $[0,\infty) \times \Z^d$, which takes the value $1$ or $0$ on the pair $(t,x)$ accordingly as $x \in A_{T(t)}$ or $x \notin A_{T(t)}$.  Write $\Omega$ for the set of functions $f: [0,\infty) \times \Z^d \to \{0,1\}$ such that $f(t,x) \leq f(t',x)$ whenever $t \leq t'$, endowed with the coordinate-wise partial ordering.  Let $\PP$ be the distribution of $\{A_{T(t)}\}_{t\geq 0}$, viewed as a probability measure on $\Omega$.

\begin{theorem}
\label{t.fkg}
{\em (FKG inequality)}
For any two increasing functions $F,G \in L^2(\Omega,\PP)$, the random variables $F( \{A_{T(t)} \}_{t \geq0} )$ and $G ( \{A_{T(t)} \}_{t \geq 0} )$ are nonnegatively correlated.
\end{theorem}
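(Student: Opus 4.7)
My plan is to realize $\{A_{T(t)}\}_{t \geq 0}$ as a pure-jump Markov process on the space of finite subsets of $\Z^d$ containing the origin (with jump rates $A \to A \cup \{y\}$ equal to the harmonic measure $H_A(y)$), prove that this process is stochastically monotone (attractive), and then quote the classical FKG inequality for monotone Markov processes due to Harris (\emph{Ann.\ Probab.} 5 (1977), 451--454; see also Liggett, \emph{Interacting Particle Systems}). The whole argument then reduces to producing one monotone coupling.

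The key step is the single-step coupling: suppose $A \subset A'$ both contain the origin, and draw one simple random walk $w = (w_0, w_1, \ldots)$ from the origin which will drive the next arrival in both copies. Let $\tau_A$, $\tau_{A'}$ be its first-exit times from $A$ and $A'$; since $A \subset A'$ we have $\tau_A \leq \tau_{A'}$. If $\tau_A = \tau_{A'}$ the two exit sites agree. Otherwise $\tau_A < \tau_{A'}$, and then $w_{\tau_A}$ lies in $A'$ because the walk has not yet left $A'$. In either case
\[ A \cup \{w_{\tau_A}\} \ \subset\ A' \cup \{w_{\tau_{A'}}\}, \]
so the inclusion is preserved. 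Using a common Poisson process for arrival times and a fresh walk (shared across the two copies) for each arrival, iterating this gives a coupling of the two continuous-time processes so that $A_{T(t)} \subset A'_{T(t)}$ for all $t \geq 0$. This is precisely the stochastic monotonicity (``attractiveness'') of the dynamics.

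With monotonicity in hand, Harris's inequality applies: since our deterministic initial law $\delta_{\{0\}}$ is trivially an FKG measure and the semigroup preserves FKG (standard consequence of monotonicity, proved by conditioning on the state at successive times and inducting), the path measure $\PP$ on $\Omega$ — equipped with the coordinate-wise partial order in which increasing in $f \in \Omega$ means coordinate-wise larger, which coincides with $A_{T(\cdot)} \subset A'_{T(\cdot)}$ — satisfies $\mathbb{E}[FG] \geq \mathbb{E}[F]\mathbb{E}[G]$ for increasing $F,G \in L^2(\Omega,\PP)$. The only nontrivial obstacle is verifying the single-step monotone coupling, and that is handled by the short argument above; the rest is a reference to the Harris/Liggett machinery.
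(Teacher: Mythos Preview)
Your route is different from the paper's and can be made to work, but one step is stated too loosely. The paper's argument is very short: the randomness driving continuous-time IDLA is a Poisson point process on $\Omega' \times [0,\infty)$ (walk space times time), and inserting an extra point $(w,s)$ can only enlarge $A_{T(t)}$ for every $t$ (this is where the abelian property of Diaconis--Fulton is used). Hence the entire trajectory $\{A_{T(t)}\}_{t\ge 0}$ is an \emph{increasing} functional of the Poisson process, and the FKG inequality follows immediately from positive association of Poisson point processes. No Markov-process correlation machinery is invoked; the relevant monotonicity is in the \emph{driving randomness}, not in the initial configuration.

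You instead prove attractiveness (monotonicity in the initial state, via the shared-walk coupling) and appeal to Harris/Liggett. That is legitimate, but the parenthetical ``the semigroup preserves FKG (standard consequence of monotonicity, proved by conditioning on the state at successive times and inducting)'' is not right as written. Attractiveness alone does \emph{not} force $P_t(x,\cdot)$ to have positive correlations: the monotone discrete-time chain on $\{0,1\}^2$ that sends $(0,0)$ to $(1,0)$ or $(0,1)$ with equal probability and then to $(1,1)$ gives, after one step, a law with negatively correlated coordinates --- so the naive induction breaks immediately. What makes Harris's 1977 theorem apply is the extra ``up--down'' hypothesis that every jump moves along a covering relation of the partial order (here: exactly one site is added per jump). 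IDLA obviously satisfies this, but you should say so rather than attribute the conclusion to monotonicity alone. With that correction your argument goes through; the paper's Poisson-monotonicity proof is shorter and self-contained, while yours outsources the nontrivial step to a black-box citation.
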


One example of an increasing function is the total number $\# A_{T(t)} \cap X$ of occupied sites in a fixed subset $X \subset \Z^d$ at a fixed time $t$.  One example of a decreasing function is the smallest $t$ for which all of the points in $X$ are occupied.  Intuitively, Theorem \ref{t.fkg} means that on the event that one point is absorbed at a late time, it is conditionally more likely for all other points to be absorbed late. The FKG inequality is an important feature of the discrete and continuous Gaussian free fields \cite{Sheffield}, so it is interesting (and reassuring) that it appears in internal DLA at the discrete level.

Note that sampling a continuous time internal DLA cluster at time $t$ is equivalent to first sampling a Poisson random variable $T$ with expectation $t$ and then sampling an ordinary internal DLA cluster of size~$T$.  (By the central limit theorem, $|t-T|$ has order $\sqrt t$ with high probability.)  Although using continuous time amounts to only a modest time reparameterization (chosen independently of everything else) it is aesthetically natural.  Our use of ``white noise'' in the heuristic of the previous section implicitly assumed continuous time.  (Otherwise the total integral of $\rho_t$ would be deterministic, so the noise would have to be conditioned to have mean zero at each time.)

\subsection{Main results in dimension two} \label{ss.twostatement}

\begin{figure}[htbp]
\begin{center}
\begin{tabular}{cc}
\includegraphics[width=.49\textwidth]{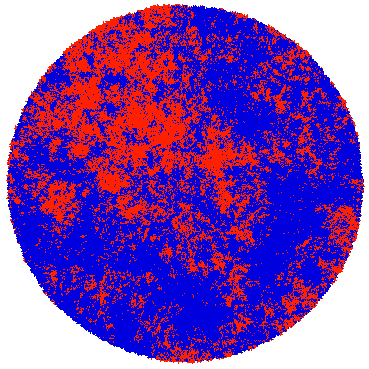} &
\includegraphics[width=.49\textwidth]{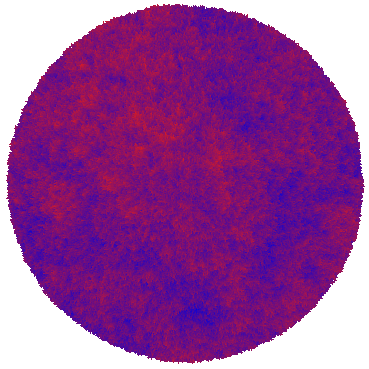} \\
(a) & (b)
\end{tabular}
\caption{\label{fig:shadedcluster} (a) Continuous-time IDLA cluster $A_{T(t)}$ for $t=10^5$.
Early points (where $L$ is negative) are colored red, and late points (where $L$ is positive) are colored blue. (b) The same cluster, with the function $L(x)$ represented by red-blue shading.}
\end{center}
\end{figure}

For $x \in \Z^2$ write $$F(x) : = \inf \{t: x \in A_{T(t)} \}$$ and $$L(x) := \sqrt{F(x)/\pi} - |x|.$$  In words, $L(x)$ is the difference between the radius of the area $t$ disk --- at the time $t$ that $x$ was absorbed into $A_T$ --- and $|x|$.  It is a measure of how much later or earlier $x$ was absorbed into $A_T$ than it would have been if the sets $A_{T(t)}$ were {\em exactly} centered discs of area $t$.  By the main result of \cite{JLS}, almost surely
	\[ \limsup_{x \in \Z^2} \frac{L(x)}{\log |x|} < \infty. \]

The coloring in Figure~\ref{fig:shadedcluster}(a) indicates the sign of the function $L(x)$, while Figure~\ref{fig:shadedcluster}(b) illustrates the magnitude of $L(x)$ by shading.  Note that the use of continuous time means that the average of $L(x)$ over $x$ may differ substantially from~$0$.  Indeed we see that --- in contrast with the corresponding discrete-time figure of \cite{JLS} ---  there are noticeably fewer early points than late points in Figure~\ref{fig:shadedcluster}(a), which corresponds to the fact that in this particular simulation $T(t)$ was smaller than $t$ for most values of $t$.  Since for each fixed $x \in \Z^2$ the quantity $L(x)$ is a decreasing function of $A_t(x)$, the FKG inequality holds for $L$ as well.  The positive correlation between values of $L$ at nearby points is readily apparent from the figure.

Identify $\R^2$ with $\C$ and let $H_0$ be the linear span of the set of functions on $\C$ of the form $\Re (a z^k) f(|z|)$ for $a \in \C$, $k \in \Z_{\geq 0}$, and $f$ smooth and compactly supported on $\R_{>0}$.  The space $H_0$ is obviously dense in $L^2(\C)$, and it turns out to be a convenient space of test functions.  The augmented GFF (and its restriction to $\partial B_1(0)$) will be defined precisely in \textsection\ref{ss.generalstatement} and \textsection\ref{ss.augmentedGFF}.

\begin{theorem} \label{t.lateness} {\em (Weak convergence of the lateness function)} As $R \to \infty$, the rescaled functions on $\R^2$ defined by $G_R((x_1,x_2)) := L((\floor{R x_1}, \floor{R x_2}))$ converge to the augmented Gaussian free field $h$ in the following sense: for each set of test functions $\phi_1, \ldots, \phi_k$ in $H_0$, the joint law of the inner products $(\phi_j, G_R)$ converges to the joint law of $(\phi_j, h)$.
\end{theorem}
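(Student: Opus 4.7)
My plan is to reduce the statement to a martingale central limit theorem for a family of ``discrepancy martingales'' indexed by the harmonic polynomial degree, and then match the resulting Gaussian covariance to that of the augmented GFF.

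The first step is a deterministic reduction expressing $(\phi, G_R)$ for $\phi(z) = \Re(a z^k) f(|z|)$ in terms of the scalar process
\[
M_k^R(t) := \sum_{x \in A_{T(t)}} \Re(a x^k) - \sum_{x \in \B_{r(t)}} \Re(a x^k), \qquad r(t) := \sqrt{t/\pi}.
\]
Setting $\tilde f_R(r) := \int_0^r f(s/R)\,ds$ and using $\mathbf{1}[x \in A_{T(t)}] = \mathbf{1}[F(x) \leq t]$, the radial layer-cake identity
\[
\tilde f_R\bigl(\sqrt{F(x)/\pi}\bigr) - \tilde f_R(|x|) = \int_0^\infty f(r/R)\bigl(\mathbf{1}[x \in \B_r] - \mathbf{1}[x \in A_{T(\pi r^2)}]\bigr)\,dr,
\]
combined with the Taylor expansion $\tilde f_R(|x|+L(x)) - \tilde f_R(|x|) = f(|x|/R)L(x) + O(L(x)^2/R)$, yields (after multiplying by $\Re(ax^k)$, summing over $x \in \Z^2$, and rescaling $r = R\rho$)
\[
(\phi,G_R) = -\frac{1}{R^{k+1}}\int_0^\infty f(\rho)\,\tilde M_k^R(\rho)\,d\rho + o(1),\qquad \tilde M_k^R(\rho) := R^{-(k+1)} M_k^R(\pi R^2\rho^2).
\]
The $o(1)$ error is controlled using the cluster-regularity bound $L(x) = O(\log|x|)$ from \cite{JLS}.

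Next I would establish a functional CLT for $\tilde M_k^R$ via the martingale CLT. Because $p(x) := \Re(a x^k)$ is harmonic with $p(0) = 0$ for $k \geq 1$, optional stopping gives $\EE[p(x_{\text{new}}) \mid \mathcal F_{t^-}] = p(0) = 0$ at each Poisson arrival, so $\sum_{x \in A_{T(t)}} p(x)$ is a pure-jump martingale and $M_k^R$ equals this martingale minus the deterministic term $\sum_{x \in \B_{r(t)}} p(x)$; the latter vanishes identically for $k \in \{1,2,3\}$ by the $\pi/2$-rotation symmetry of the lattice ball, and more generally is $o(R^{k+1})$ by oscillatory cancellation. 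For the predictable quadratic variation, the fluctuation bound of \cite{JLS, JLS2}---which places $A_{T(t)}$ inside an $O(\log t)$-thick annular shell around $\B_{r(t)}$---permits replacing the harmonic measure on $\partial A_{T(t)}$ seen from $0$ by the uniform measure on $\partial B_{r(t)}$, giving
\[
\EE\bigl[p(x_{\text{new}})^2 \mid \mathcal F_{t^-}\bigr] \approx \frac{1}{2\pi r(t)}\int_{\partial B_{r(t)}} p^2\,dS = \tfrac12 |a|^2 r(t)^{2k}.
\]
Integrating over $t \in [0,\pi R^2\rho^2]$ and dividing by $R^{2(k+1)}$ gives $\langle \tilde M_k^R\rangle_\rho \to \frac{\pi |a|^2}{2(k+1)}\rho^{2k+2}$ in probability. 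Since jumps are deterministically $O(R^k)$, the Lindeberg condition for the $R^{-(k+1)}$-normalized martingale is automatic, and one concludes $\tilde M_k^R \Rightarrow \tilde M_k$, a continuous Gaussian martingale with the stated bracket.

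For joint convergence over several test functions $\phi_j(z) = \Re(a_j z^{k_j})f_j(|z|)$, the same harmonic-measure computation shows that the cross-brackets of $\tilde M_{k_j}^R$ and $\tilde M_{k_{j'}}^R$ are asymptotically proportional to $\frac{1}{2\pi}\int_0^{2\pi}\Re(a_j e^{ik_j\theta})\,\Re(a_{j'} e^{ik_{j'}\theta})\,d\theta$, which vanishes when $k_j \neq k_{j'}$ or when the angular harmonics are $L^2$-orthogonal. Combined with the first-step reduction this gives joint Gaussian convergence of $((\phi_j,G_R))_j$, and a direct covariance computation identifies the limit with $((\phi_j,h))_j$ for the augmented GFF: the factor $1/(k+1)$ in \eqref{eq:fourier} is precisely what the martingale bracket produces (in contrast to the $1/k$ one would get from the ordinary GFF). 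The principal obstacle, to my mind, is the predictable quadratic variation estimate: the $O(\log t)$ fluctuation bound is a strong a priori input, but translating it into an effective control of harmonic measure on $\partial A_{T(t)}$ that is sharp enough---uniformly in $t \leq cR^2$ and in the random shape of the cluster---to keep the accumulated bracket error of order $o(R^{2(k+1)})$ is where the main analytic work lies, and the symmetry argument controlling the deterministic drift $\sum_{x \in \B_{r(t)}}p(x)$ in the remaining angular sectors requires complementary lattice-sum analysis.
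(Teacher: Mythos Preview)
Your overall architecture---the layer-cake identity expressing $(\phi,G_R)$ as an integral over radii of discrepancy sums, followed by a martingale CLT---is exactly the paper's approach. Two points deserve correction.

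First, a genuine gap: for $k\ge 4$ the polynomial $p(x)=\Re(ax^k)$ is \emph{not} discrete harmonic on $\Z^2$ (the discrete Laplacian of $z^k$ picks up contributions from $\binom{k}{j}$ with $4\mid j$), so $\sum_{x\in A_{T(t)}}p(x)$ is not a martingale and your optional-stopping step fails. The paper replaces $z^k$ by the discrete harmonic polynomial $p_k(z)=\Xi[z^k]$ constructed in \S\ref{s.polynomials}, which satisfies $p_k(z)-z^k=O(|z|^{k-2})$; this correction is small enough to be absorbed into the error analysis but is essential for the martingale property. Relatedly, the ``oscillatory cancellation'' you invoke for the drift $\sum_{x\in\B_{r(t)}}p(x)$ when $4\mid k$ is not a soft fact: it is the van der Corput bound $t^{-k/2}\bigl|\sum_{\pi|z|^2\le t}z^k\bigr|\le Ct^{1/3}$ (Lemma~\ref{vandercorput}), which the paper proves via Poisson summation and stationary phase.

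Second, the ``principal obstacle'' you identify---controlling harmonic measure on $\partial A_{T(t)}$ in order to estimate the predictable bracket---is entirely bypassed in the paper, and you can bypass it too. Rather than compute $\EE[p(x_{\mathrm{new}})^2\mid\mathcal F_{t^-}]$, the paper works with the \emph{realized} quadratic variation $\sum_s|M(s{+}1)-M(s)|^2$. Since internal DLA eventually absorbs every lattice point exactly once, this is a sum over $z\in\Z+i\Z$ of a quantity depending only on $z$ and the absorption time $F_0(z)$; the fluctuation bound $|F_0(z)-\pi|z|^2|=O(|z|\log|z|)$ then lets one replace $F_0(z)$ by $\pi|z|^2$, reducing the whole sum to an explicit deterministic Riemann integral that evaluates to $V_0$. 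No estimate on harmonic measure of the random boundary is needed. In your own formulation the same shortcut is available: the realized bracket of $\sum_{x\in A_{T(t)}}p_k(x)$ is exactly $\sum_{x\in A_{T(t)}}p_k(x)^2$, which the $O(\log t)$ shape theorem immediately shows is $(1+o(1))\int_{B_{r(t)}}|\Re(az^k)|^2\,dz$, giving the factor $\frac{1}{2(k+1)}$ without any appeal to the distribution of $x_{\mathrm{new}}$.
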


\begin{figure}[htbp]
\begin{center}
\includegraphics[width=.4\textwidth]{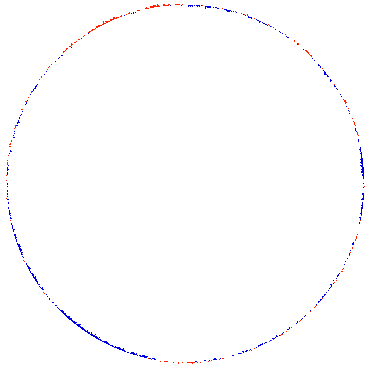} \\ ~ \\
\includegraphics[width=\textwidth]{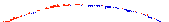}
\caption{\label{fig:symmetricdifference} Top: Symmetric difference between IDLA cluster $A_{T(t)}$ at continuous time $t=10^5$ and the disk of radius $\sqrt{t/\pi}$.  Bottom: closeup of a portion of the boundary.  Sites outside the disk are colored red if they belong to $A_{T(t)}$; sites inside the disk are colored blue if they do not belong to $A_{T(t)}$.}
\end{center}
\end{figure}

Our next result addresses the fluctuations from circularity at a fixed time, as illustrated in Figure~\ref{fig:symmetricdifference}.

\begin{theorem} \label{t.fluctuations}
{\em (Fluctuations from circularity)}
Consider the distribution with point masses on $\R^2$ given by
\begin{equation}\label{e.et} E_t := r^{-1} \sum_{x \in \mathbb Z^2} \bigl( 1_{x \in A_{T(t)}} - 1_{x \in \B_r} \bigr) \delta_{x/r},\end{equation}
where $r = \sqrt{t/\pi}$.  As $t \to \infty$, the $E_t$ converge to the restriction of the augmented GFF to $\partial B_1(0)$, in the sense that for each set of test functions $\phi_1, \ldots, \phi_k$ in $H_0$, the joint law of $(\phi_j, E_t)$ converges to the joint law of $\Phi_h(\phi_j,\pi)$ (a Gaussian process defined in \textsection\ref{ss.generalstatement}).
\end{theorem}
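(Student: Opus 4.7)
The plan is to express $(\phi_j, E_t)$ as a functional of the lateness function $L$ and then invoke Theorem~\ref{t.lateness}. By linearity it suffices to prove joint convergence for a finite collection of test functions of the pure form $\phi(z) = \Re(az^k) f(|z|)$, with $a \in \C$, $k \in \Z_{\geq 0}$, and $f$ smooth and compactly supported in $\R_{>0}$. From the definition $L(x) = \sqrt{F(x)/\pi} - |x|$ together with $t = \pi r^2$, one has the pointwise identity
\begin{equation*}
1_{x \in A_{T(t)}} - 1_{x \in \B_r} \;=\; 1_{|x| \le r - L(x)} - 1_{|x| \le r},
\end{equation*}
so the symmetric difference $A_{T(t)} \triangle \B_r$ consists exactly of the lattice points whose radius lies between $r$ and $r - L(x)$, contributing with sign $-\mathrm{sgn}(L(x))$.

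Next, by the a.s.\ $O(\log r)$ bound on $|L|$ near the boundary from \cite{JLS}, the symmetric difference is contained in an annular sliver of radial width $O(\log r)$ around $\partial B_r(0)$. On this sliver, Taylor expansion yields $\Re(a(x/r)^k) = \Re(a e^{i k \arg x}) + O(r^{-1}\log r)$ and $f(|x|/r) = f(1) + O(r^{-1}\log r)$. Summing the contributions along approximate rays---so that the number of lattice cells contributing in the angular direction $\theta$ is, to leading order, $|L(r e^{i\theta})|$---I expect to obtain
\begin{equation*}
(\phi, E_t) \;=\; -\,f(1) \int_0^{2\pi} \Re\bigl(a e^{ik\theta}\bigr)\, L(r e^{i\theta}) \, d\theta \;+\; o(1) \quad \text{in probability.}
\end{equation*}

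The right-hand side is, up to a bounded factor, the pairing of the rescaled lateness field $G_r$ against a generalized test function supported on $\partial B_1(0)$. To deduce convergence from Theorem~\ref{t.lateness}, I would approximate the boundary distribution by honest elements of $H_0$, replacing the radial delta at $1$ by a smooth radial bump $f_\varepsilon \in C_c^\infty(\R_{>0})$ concentrated on $[1-\varepsilon, 1+\varepsilon]$. Applying Theorem~\ref{t.lateness} to the family $\{\Re(a_j z^{k_j}) f_\varepsilon(|z|)\}_j$ gives joint Gaussian limits, and then sending $\varepsilon \to 0$ (after $t \to \infty$) identifies the limit with $\Phi_h(\phi_j, \pi)$, whose marginal distributions are read off from the Fourier representation~\eqref{eq:fourier} of the restriction of $h$ to $\partial B_1(0)$.

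The main obstacle is justifying the lattice-ray counting in the middle step: the number of lattice cells in a thin angular wedge of the symmetric difference must track $|L(r e^{i\theta})|$ with only logarithmic error, despite $L$ itself being of comparable order and fluctuating on mesoscopic scales. I expect to handle this using the martingale machinery already needed for Theorem~\ref{t.lateness}, namely the second-moment bounds for sums of discrete harmonic polynomials over $A_{T(t)}$; these control both the roughness of the cluster boundary and the error made in replacing a lattice sum by a smooth angular integral. A secondary technical point is justifying the $\varepsilon \to 0$ limit in the last step, which requires uniform (in $\varepsilon$ and $t$) second-moment estimates on the pairings $(\Re(a z^k) f_\varepsilon(|z|), G_r)$ in order to interchange the two limits.
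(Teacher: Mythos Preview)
Your route through Theorem~\ref{t.lateness} is genuinely different from the paper's, and it carries two real gaps that the paper's argument avoids entirely.

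\medskip

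\textbf{What the paper does instead.} The paper derives Theorem~\ref{t.fluctuations} not from Theorem~\ref{t.lateness} but from the $d=2$ case of Theorem~\ref{t.highdconvergence}. The point is that any $\phi\in H_0$, restricted to $\partial B_1(0)$, agrees with a harmonic polynomial $\psi$; since the nonzero mass of $E_t$ lives within $O(r^{-1}\log r)$ of $\partial B_1(0)$, replacing $\phi$ by the discrete-harmonic approximant $\psi_{(m)}$ changes the pairing by at most $O\bigl(m^{-d/2}\cdot m^{d-1}\log m\cdot m^{-1}\log m\bigr)=o(1)$ when $d=2$. After this replacement $(\psi_{(m)},E_t)$ is, up to the swap of $1_{\B_r}$ for the divisible sandpile $w_t$, exactly $\Phi_A^m(\psi,\pi)$, and Theorem~\ref{t.highdconvergence} gives the Gaussian limit directly. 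The swap $1_{\B_r}\leftrightarrow w_t$ is handled by the van der Corput bounds of Lemma~\ref{vandercorput}. No lateness function, no boundary integral, no double limit.

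\medskip

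\textbf{Where your approach breaks.} Your key heuristic---that the number of lattice cells in the symmetric difference at angle $\theta$ is $|L(re^{i\theta})|$---is not the right quantity. From your own identity $1_{A_{T(t)}}(x)=1_{|x|\le r-L(x)}$, what governs the count along the ray at angle $\theta$ is the location of the jump of $s\mapsto 1_{s\le r-L(se^{i\theta})}$, i.e.\ the point $s^*(\theta)$ with $s^*(\theta)+L(s^*(\theta)e^{i\theta})=r$. Thus the radial width of the symmetric difference is $|s^*(\theta)-r|=|L(s^*(\theta)e^{i\theta})|$, the lateness at the \emph{cluster boundary} point, not at the circle point $re^{i\theta}$. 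The two points differ by $O(\log r)$, and nothing in \cite{JLS} bounds $|L(x)-L(x')|$ for $|x-x'|=O(\log r)$; both values are themselves $O(\log r)$, and you would need their difference to be $o(1)$ after angular averaging against $\Re(ae^{ik\theta})$. The ``martingale machinery'' you invoke for this is essentially the content of Theorem~\ref{t.highdconvergence} itself, so you would be re-proving that theorem inside your argument rather than leveraging Theorem~\ref{t.lateness}.

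Secondly, even granting the boundary integral representation, Theorem~\ref{t.lateness} only gives convergence of $(\phi,G_R)$ for \emph{fixed} $\phi\in H_0$, so the $\varepsilon\to 0$ interchange requires tightness uniform in $\varepsilon$ that is not supplied by that theorem. You correctly flag this, but the needed uniform second-moment bounds again reduce to the quadratic-variation computation underlying Theorem~\ref{t.highdconvergence}. In short, your path is circuitous: every hard step funnels back to the fixed-time martingale CLT that the paper simply quotes.
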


\subsection{Main results in general dimensions} \label{ss.generalstatement}

In this section, we will extend Theorem \ref{t.fluctuations} to general dimensions and to a range of times (instead of a single time).  That is, we will try to understand scaling limits of the discrepancies of the sort depicted in Figure \ref{fig:symmetricdifference} (interpreted in some sense as random distributions) in general dimensions and taken over a range of times.  However, some caution is in order. By classical results in number theory (see the survey \cite{IKKN} for their history), the size of $\B_r = B_r(0) \cap \Z^d$ is approximately the volume of $B_r(0)$ --- but with errors of order $r^{d-2}$ (i.e., both $O(r^{d-2})$ and $\Omega(r^{d-2})$) in all dimensions $d \geq 5$.  The errors in dimension $d=4$ are of order $r^{d-2}$ times logarithmic factors that grow to infinity.  It remains a famous open problem in number theory to estimate the errors when $d \in \{2,3\}$.  (When $d=2$ this is called Gauss's circle problem.)

These number theoretic results imply that $\# \B_{r(t)}$ is, as a function of $t$, much more irregular than the size $T(t)$ of the cluster obtained in continuous time internal DLA, at least when $d \geq 5$.  The results also imply that even if points were added to $A_t$ precisely in order of increasing radius, the difference between the radius of $A_t$ and the radius of $B_{r(t)}(0)$ would fail to be $o(r^{-1})$ when $d\geq 5$ and fail to be $O(r^{-1})$ when $d = 4$.

On the other hand, we will see that the kinds of fluctuations that emerge from internal DLA randomness are of the order that one would obtain by spreading an extra $r^{d/2} \sim \sqrt t$ particles over a constant fraction of the spherical boundary, which is also what one obtains by changing the radius (along some or all of the boundary) by $r^{1-d/2}$.  This implies that the higher dimensional analog of Theorem \ref{t.fluctuations} cannot be true exactly the way it is stated if $d \geq 4$.  Indeed, suppose that we define $E_t$ analogously to \eqref{e.et} as
\[ E_t = r^{-d/2} \sum_{x \in \mathbb Z^d} \bigl( 1_{x \in A_{T(t)}} - 1_{x \in \B_r} \bigr) \delta_{x/r}, \]
and let $\phi$ be a test function that is equal to $1$ in a neighborhood of $\partial B_1(0)$.  Then the results mentioned above imply that \[ (E_t, \phi) = r^{-d/2} \bigl(T(t) -  \# \B_r \bigr) \] cannot converge in law to a finite random variable if $d \geq 4$.

It is therefore a challenge to formulate a central limit theorem for the (small) fluctuations of internal DLA that is not swamped by these (potentially large) number theoretic irregularities.  We will see below that this can be achieved by replacing $\B_r$ with different ball approximations (the so-called ``divisible sandpiles'') that are in some sense even ``rounder'' than the lattice balls themselves.  We will also have to define and interpret the (augmented) GFF in a particular way.

Given smooth real-valued functions $f$ and $g$ on $\R^d$, write
\begin{equation*} \label{e.dirichletinnerproduct} (f,g)_{\nabla} = \int_{\mathbb \R^d} \nabla f(x) \cdot \nabla g(x)dx.\end{equation*}
Here and below $dx$ denotes Lebesgue measure on $\R^d$.
Given a bounded domain $D \subset \R^d$, let $H(D)$ be the Hilbert space closure in $(\cdot,\cdot)_{\nabla}$ of the set of smooth compactly supported functions on $D$.  We define $H = H(\R^d)$ analogously except that the functions are taken modulo additive constants.  The Gaussian free field (GFF) is defined formally by \begin{equation} \label{e.hsum} h:=\sum_{i=1}^\infty \alpha_i f_i,\end{equation}
where the $f_i$ are any fixed $(\cdot, \cdot)_\nabla$ orthonormal basis for $H$ and the $\alpha_i$ are i.i.d.\ mean zero, unit variance normal random variables.  (One also defines the GFF on $D$ similarly, using $H(D)$ in place of $H$.)  The augmented GFF will be defined similarly below, but with a slightly different inner product.

Since the sum \eqref{e.hsum} a.s.\ does not converge within $H$, one has to think a bit about how $h$ is defined.  Note that for any {\em fixed} $f = \sum \beta_i f_i \in H$, the quantity $(h,f)_\nabla := \sum (\alpha_i f_i, f)_{\nabla} = \sum \alpha_i \beta_i$ is almost surely finite and has the law of a centered Gaussian with variance $\|f\|_\nabla = \sum |\beta_i|^2$.  However, there a.s.\ exist some functions $f \in H$ for which the sum does not converge, and $(h, \cdot)_\nabla$ cannot be considered as a continuous functional on all of $H$.  Rather than try to define $(h,f)_\nabla$ for all $f \in H$, it is often more convenient and natural to focus on some subset of $f$ values (with dense span) on which $f \mapsto (h, f)_\nabla$ is a.s.\ a continuous function (in some topology).  Here are some sample approaches to defining a GFF on $D$:

\begin{enumerate}
\item {\bf $h$ as a random distribution:} For each smooth, compactly supported $\phi$, write $(h,\phi) := (h, -\Delta^{-1} \phi)_\nabla$, which (by integration by parts) is formally the same as $\int h(x) \phi(x) dx$.  This is almost surely well defined for all such $\phi$ and makes $h$ a random distribution \cite{Sheffield}.  (If $D = \R^d$ and $d=2$, one requires $\int \phi(x)dx = 0$, so that $(h,\phi)$ is defined independently of the additive constant.  When $d>2$ one may fix the additive constant by requiring that the mean of $h$ on $B_r(0)$ tends to zero as $r \to \infty$ \cite{Sheffield}.)
\item {\bf $h$ as a random continuous $(d+1)$-real-parameter function:} For each $\eps > 0$ and $x \in \R^d$, let $h_\eps(x)$ denote the mean value of $h$ on $\partial B_\eps(x)$.  For each fixed $x$, this $h_\eps(x)$ is a Brownian motion in time parameterized by $-\log \eps$ in dimension $2$, or $-\eps^{2-d}$ in higher dimensions \cite{Sheffield}.  For each fixed $\eps$, the function $h_\eps$ can be thought of as a regularization of $h$ (a point of view used extensively in \cite{DS}).
\item {\bf $h$ as a family of ``distributions'' on origin-centered spheres:}  For each polynomial function $\psi$ on $\R^d$ and each time $t$, define $\Phi_h(\psi,t)$ to be the integral of $h \psi$ over $\partial B_r(0)$ where $B_r(0)$ is the origin-centered ball of volume $t$.  We actually lose no generality in requiring $\psi$ to be a harmonic polynomial on $\R^d$, since the restriction of any polynomial to $\partial B_r(0)$ agrees with the restriction of a (unique) harmonic polynomial.
\end{enumerate}

The last approach turns out to be particularly natural for our purposes.  Using this approach, we will now give our first definition of the augmented GFF: it is the centered Gaussian function $\Phi_h$ for which
\begin{equation} \label{e.cov} \Cov\bigl(\Phi_h(\psi_1, t_1), \Phi_h(\psi_2,t_2)\bigr) = \int_{B_r(0)} \psi_1(x)\psi_2(x)dx,\end{equation}
for all harmonic polynomials $\psi_1$ and $\psi_2$, where $B_r(0)$ is the ball of volume $\min \{t_1, t_2 \}$.
In particular, taking $\psi_1 = \psi_2 = \psi$, we find that
\begin{equation} \label{e.var} \Var\bigl(\Phi_h(\psi, t)\bigr) = \int_{B_r(0)} \psi(x)^2 dx.\end{equation}

Though not immediately obvious from the above, we will see in \textsection\ref{ss.augmentedGFF} that this definition is very close to that of the ordinary GFF.  Now, for each integer~$m$ and harmonic polynomial~$\psi$, there is a discrete harmonic polynomial $\psi_{(m)}$ on $\frac{1}{m} \mathbb Z^d$ (defined precisely in \textsection\ref{s.polynomials}) that approximates $\psi$ in the sense that
$\psi - \psi_{(m)}$ is a polynomial of degree at most $k-2$, where $k$ is the degree of $\psi$.
In particular, if we fix $\psi$ and limit our attention to $x$ in a fixed bounded subset of $\R^d$, then we have $|\psi_{(m)}(x)-\psi(x)| = O(1/m^2)$.
Let $\Grid$ denote the grid comprised of the edges connecting nearest neighbor vertices of $\mathbb Z^d$.   (As a set, $\Grid$ consists of the points in $\mathbb R^d$ with at most one non-integer coordinate.)    As in \cite{JLS}, we extend the definition of $\psi_{(m)}$ to $\Grid$ by linear interpolation.

Now write
	\begin{align} \label{e.phia} \Phi_A^m(\psi,t) :=
	m^{-d/2}\sum_{x \in A_{T(m^dt)}} \psi_{(m)}(x/m) - t \psi_{(m)}(0).
	\end{align}
This random variable measures to what extent the mean value property for the discrete harmonic polynomial $\psi_{(m)}$ fails for the set $A_{T(m^dt)}$.  As such, it is a way of measuring the deviation of $A_{T(m^d t)}$ from circularity.

\begin{theorem} \label{t.highdconvergence}
Let $h$ be the augmented GFF, and $\Phi_h$ as discussed above.  Then as $m \to \infty$, the random functions $\Phi_A^m$ converge in law to $\Phi_h$ (w.r.t.\, the smallest topology that makes $\Phi \mapsto \Phi(\psi,t)$ continuous for each $\psi$ and $t$).  In other words, for any finite collection of pairs $(\psi_1, t_1), \ldots, (\psi_k,t_k)$, the joint law of the $\Phi_A^m(\psi_i,t_i)$ converges to the joint law of the $\Phi_h(\psi_i,t_i)$.
\end{theorem}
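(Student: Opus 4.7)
The plan is to rewrite $\Phi_A^m(\psi,t)$ as a martingale in $t$ plus an independent Poissonian correction, and then apply a vector-valued martingale central limit theorem whose limiting covariance we match against \eqref{e.cov}. Let $X_j$ denote the absorption site of the $j$th IDLA particle. Since $\psi_{(m)}$ is discrete harmonic on $\tfrac{1}{m}\mathbb Z^d$, the stopped random walk value $\psi_{(m)}(X_j/m)$ has conditional mean $\psi_{(m)}(0)$ given $A_{j-1}$, so
\[
M^m_t := m^{-d/2}\sum_{j=1}^{T(m^d t)}\!\bigl[\psi_{(m)}(X_j/m)-\psi_{(m)}(0)\bigr],\qquad
N^m_t := m^{-d/2}\bigl[T(m^d t)-m^d t\bigr]\,\psi_{(m)}(0)
\]
are both continuous-time martingales with respect to the natural filtration generated by the Poisson clock and the random walks, and their sum equals $\Phi_A^m(\psi,t)$.

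The main computation is the predictable quadratic variation. Each Poisson jump contributes $m^{-d}\bigl(\psi_{(m)}(X/m)-\psi_{(m)}(0)\bigr)^2$ to $\langle M^m\rangle_t$. The key input is that, by the roundness bounds of \cite{JLS,JLS2}, for typical internal times $s\le m^d t$ the cluster $A_{T(s)}$ lies within polylogarithmic distance of the Euclidean ball of volume $s$, so the harmonic measure on $\partial A_{T(s)}$ from the origin is close to the uniform measure on the corresponding sphere. Combined with $|\psi_{(m)}-\psi|=O(m^{-2})$ on bounded sets, a direct scaling calculation (for $\psi$ homogeneous of degree $k\ge 1$ one gets an integral of the form $m^{-d-2k}\int_0^{m^d t}(s/\omega_d)^{2k/d}\,ds$ that reduces cleanly to $\int_{B_{r(t)}}\psi^2$) yields $\langle M^m\rangle_t\to\int_{B_{r(t)}}\psi^2\,dx-t\,\psi(0)^2$ in probability, with $r(t)=(t/\omega_d)^{1/d}$, while $\langle N^m\rangle_t\to t\,\psi(0)^2$ and the cross-variation $\langle M^m,N^m\rangle_t=m^{-d/2}\psi_{(m)}(0)\,M^m_t\to 0$. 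Individual jumps of both martingales are $O(m^{-d/2})$, so Lindeberg is immediate, and the vector martingale CLT gives joint Gaussian convergence of $(M^m_t,N^m_t)$ to independent centred Gaussians with the stated variances. Summing, $\Phi_A^m(\psi,t)$ converges in law to a centred Gaussian of variance $\int_{B_{r(t)}}\psi^2\,dx$, matching \eqref{e.cov} in the diagonal case.

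Joint convergence for finitely many $(\psi_i,t_i)$ then follows from the same vector martingale CLT applied to the collection $\{(M^m_\cdot(\psi_i),N^m_\cdot(\psi_i))\}_i$. Polarising the QV calculation above, the cross-variation of $M^m(\psi_i)$ and $M^m(\psi_j)$ at $t_\wedge=\min(t_i,t_j)$ converges to $\int_{B_{r(t_\wedge)}}\psi_i\psi_j\,dx-t_\wedge\psi_i(0)\psi_j(0)$; combined with $\Cov(N^m_{t_i}(\psi_i),N^m_{t_j}(\psi_j))\to t_\wedge\psi_i(0)\psi_j(0)$, the total covariance is exactly $\int_{B_{r(t_\wedge)}}\psi_i\psi_j\,dx$, matching \eqref{e.cov}. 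The main obstacle is the QV convergence itself: the pointwise a.s.\ bounds from \cite{JLS,JLS2} do not immediately imply that the time-integrated harmonic-measure averages on $\partial A_{T(s)}$ concentrate around their spherical analogues, and so the harmonic-polynomial martingale and concentration estimates developed in the body of this paper — in particular the quantitative control on how $\sum_{x\in A_t}\psi_{(m)}(x/m)$ fluctuates as the cluster grows — are the essential technical input that turns the heuristic above into a proof.
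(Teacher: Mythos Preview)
Your overall architecture (split $\Phi_A^m$ into a harmonic-polynomial martingale plus an independent Poisson correction, apply a martingale CLT, and verify the limiting covariances match \eqref{e.cov}) is exactly the paper's. The difference, and the genuine gap, is in how you handle the quadratic variation.

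You try to compute the \emph{predictable} QV of $M^m$: at step $j$ this is $m^{-d}\,\EE\bigl[\psi_{(m)}(X_j/m)^2\mid A_{j-1}\bigr]$, an integral of $\psi_{(m)}^2$ against harmonic measure on $\partial A_{j-1}$. You then say that, since the cluster is polylog-close to a ball, this harmonic measure should be close to uniform on the corresponding sphere. That implication is not established by the roundness bounds of \cite{JLS,JLS2}, and you correctly flag it as ``the main obstacle,'' deferring to unspecified further estimates. But those estimates are not in the paper, and are not needed.

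The paper's observation, which you are missing, is that the \emph{realized} sum of squared increments can be rewritten without reference to harmonic measure at all: since each lattice site is absorbed exactly once,
\[
\sum_{j\le T(m^d t)} m^{-d}\,\psi_{(m)}(X_j/m)^2 \;=\; m^{-d}\sum_{x\in A_{T(m^d t)}}\psi_{(m)}(x/m)^2.
\]
The right-hand side is a Riemann sum over a set that (by \cite{JLS,JLS2}) is within $O(\log m)$ of a ball, and hence converges to $\int_{B_{r(t)}}\psi^2\,dx$ by elementary integration. The remaining step is to show that the quadratic variation $s_m(t)$ (after embedding the walks as Brownian motion on the grid, so the martingale representation theorem applies) differs from this Riemann sum by something negligible. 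The paper notes that the difference is itself a martingale in $t$ whose increments have second moment $O(m^{-2d})$; summing over $O(m^d)$ increments gives $\EE[(\,\cdot\,)^2]=O(m^{-d})\to0$. That is the whole argument: two lines, no harmonic-measure analysis.

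So your proposal is not wrong in spirit, but the route you chose leaves exactly the hole you identified, while the paper closes the argument with a short trick that makes the hole disappear.
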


\begin{remark}
Theorem \ref{t.highdconvergence} does not really address the discrepancies between $\B_r$ and $A_T$ (which, as we remarked, can be very large, in particular in the case that $\psi$ is a constant function).  However, it can be interpreted as a measure of the discrepancy between $A_T$ and the so-called \emph{divisible sandpile}, which is a function $w_t:\Z^d \to [0,1]$ defined for all $t \geq 0$.  The quantity $w_t(x)$ represents the amount of mass that ends up at $x$ if one begins with $t$ units of mass at the origin and then ``spreads'' the mass around according to certain rules that ensure that the final amount of mass at each site is at most one.  For fixed $x$, the quantity $w_t(x)$ is a continuously increasing function of $t$, and moreover there exists a constant $c$ depending only on the dimension~$d$, such that $w_t(x) = 1$ if $|x| < r(t) -c$ and $w_t(x) = 0$ if $|x| > r(t) + c$ \cite{LP}.  An important property of $w_t(x)$ is that for any discrete harmonic function $\psi$ on $\Z^d$ we have $\sum_{x \in \Z^d} w_t(x) \psi(x) = 0$.  It is natural to replace \eqref{e.et} with
\begin{equation} \label{e.et2} \tilde E_t := r^{-d/2} \sum_{x \in \mathbb Z^d} \bigl( 1_{x \in A_{T(t)}} - w_t(x) \bigr) \delta_{x/r},\end{equation}
and interpret Theorem \ref{t.highdconvergence} as a statement about the distributional limit of $\tilde E_t$.
\end{remark}

\begin{remark}
Even with the replacement above, Theorem \ref{t.highdconvergence} differs from Theorem \ref{t.fluctuations}, since it requires that we use only harmonic polynomial test functions $\psi$ and also requires that we replace them with approximations $\psi_{(m)}$ on the discrete level.
It is natural to ask, in general dimensions, what happens when we try to modify the statement of Theorem \ref{t.highdconvergence} (interpreted as a sort of distributional limit statement for \eqref{e.et2}) to make it read like the distributional convergence statement of Theorem \ref{t.fluctuations}.  We will discuss this in more detail in \textsection\ref{ss.fixedtimeproof}, but we can summarize the situation roughly as follows:

\begin{table}[htbp]
\begin{center}
\begin{tabular}{ll}
\textbf{Modification}     &                           \textbf{When it matters} \\[3pt]
\hline
\begin{tabular}{ll}
\textbf{Replacing} $w_t$ in \eqref{e.et2} with $1_{\B_r}$ \\ ~
\end{tabular}
&
\begin{tabular}{ll} No effect when $d=2$.  \\ Invalidates result when $d > 3$.
\end{tabular}
\\[3pt]
\hline
\begin{tabular}{ll}
\textbf{Keeping} $w_t$ in \eqref{e.et2} but \\ \textbf{Replacing} $\psi_{(m)}$ with $\psi$
\end{tabular}
 &
 \begin{tabular}{ll}
No effect if $d \in \{2,3,4,5 \}$. \\  Unclear if $d > 5$.
 \end{tabular}
 \\[3pt]
\hline
\begin{tabular}{ll}
\textbf{Keeping} $w_t$ in \eqref{e.et2} but \\ \textbf{Replacing} $\psi_{(m)}$ with general \\ smooth test function $\phi$.
\end{tabular}
&
\begin{tabular}{ll}
No effect if $d \in \{2,3\}$.\\ Probably invalidates result if $d>3$.
\end{tabular}
\end{tabular}
\end{center}
\end{table}

The restriction to harmonic $\psi$ (as opposed to a more general test function $\phi$) seems to be necessary in large dimensions because otherwise the derivative of the test function along $\partial B_1(0)$ appears to have a non-trivial effect on \eqref{e.et2} (see \textsection\ref{ss.fixedtimeproof}).  This is because \eqref{e.et2} has a lot of positive mass just outside of the unit sphere and a lot of negative mass just inside the unit sphere.  It may be possible to formulate a version of Theorem \ref{t.highdconvergence} (involving some modification of the ``mean shape'' described by $w_t$) that uses test functions that depend only on $\theta$ in a neighborhood of the sphere (instead of using only harmonic test functions), but we will not address this point here.  Deciding whether Theorem \ref{t.lateness} as stated extends to higher dimensions requires some number theoretic understanding of the extent to which the discrepancies between $w_t$ and $1_{\B_r}$ (as well as the errors that come from replacing a $\psi_{(m)}$ with a smooth test function $\phi$) average out when one integrates over a range of times.  We will not address these points here either.
\end{remark}

\subsection{Comparing the GFF and the augmented GFF} \label{ss.augmentedGFF}

We may write a general vector in $\R^d$ as $r \theta$ where $r \in [0, \infty)$ and $\theta \in S^{d-1} := \partial B_1(0)$.  We write the Laplacian in spherical coordinates as \begin{equation} \label{e.sphericallaplace} \Delta = r^{1-d} \frac{\partial}{\partial r}r^{d-1} \frac{\partial}{\partial r} + r^{-2} \Delta_{S^{d-1}}.\end{equation}
A polynomial $f \in \R[x_1,\ldots,x_d]$ is called \emph{harmonic} if $\Delta f$ is the zero polynomial.
Let $V_\ell$ denote the space of all homogenous harmonic polynomials in $\R[x_1,\ldots,x_d]$ of degree $\ell$, and let $H_\ell$ denote the space of functions on $S^{d-1}$ obtained by restriction from $V_\ell$.  If $f \in H_\ell$, then we can write $f(r\theta) = g(\theta) r^{\ell}$ for a function $g \in H_\ell$, and setting \eqref{e.sphericallaplace} to zero at $r=1$ yields
$$\Delta_{S^{d-1}} g = -\ell(\ell + d - 2) g,$$
i.e., $g$ is an eigenfunction of $\Delta_{S^{n-1}}$ with eigenvalue $-\ell(\ell+d-2)$.  Note that \eqref{e.sphericallaplace} continues to be zero if we replace $\ell$ with the negative number $\ell' := -(d-2) - \ell$, since the expression $-\ell(\ell + d - 2)$ is unchanged by replacing $\ell$ with $\ell'$.  Thus, $g(\theta)r^{\ell'}$ is also harmonic on $\R^d \setminus \{0\}$.

Now, suppose that $g$ is normalized so that
	\begin{equation} \label{normalization}
	\int_{S^{d-1}} g(\theta)^2 d\theta = 1.
	\end{equation}
By scaling, the integral of $f^2$ over $\partial B_R(0)$ is thus given by $R^{d-1}R^{2\ell}$.  The $L^2$ norm on all of $B_R(0)$ is then given by
\begin{equation} \label{e.l2norm} \int_{B_R(0)} f(z)^2 dz = \int_0^R r^{d-1}r^{2\ell}dr = \frac{R^{d+2\ell}}{d+2\ell}.\end{equation}

A standard identity states that the Dirichlet energy of $g$, as a function on $S^{d-1}$, is given by the $L^2$ inner product $(-\Delta g, g) = \ell(\ell+d-2)$.  The square of $\| \nabla f \|$ is given by the square of its component along $S^{d-1}$ plus the square of its radial component.  We thus find that the Dirichlet energy of $f$ on $B_R(0)$ is given by
\begin{align*}\int_{B_R(0)} \| \nabla f(z)  \|^2 dz & = \ell(\ell+d-2) \int_0^R r^{d-1}r^{2(\ell-1)} dr   +   \int_0^R r^{d-1} r^{2(\ell-1)} \ell^2 dr \\ & = \frac{\ell(\ell+d-2)}{2\ell+d-2}R^{2\ell+d-2} +  \frac{\ell^2}{2\ell+d- 2} R^{2\ell + d-2} \\
& = \frac{2 \ell^2 + (d-2)\ell}{2\ell + (d-2)} R^{2\ell+d-2} \\ & = \ell  R^{2\ell+d-2}.
\end{align*}

Now suppose that we fix the value of $f$ on $\partial B_R(0)$ as above but harmonically extend it outside of $B_R(0)$ by writing $f(r \theta) = R^{\ell - \ell'} g(\theta)r^{\ell'}$ for $r > R$.  Then the Dirichlet energy of $f$ outside of $B_R(0)$ can be computed as

$$ R^{2(\ell - \ell')} \ell(\ell+d-2) \int_R^\infty r^{d-1}r^{2(\ell'-1)} dr +   R^{2(\ell - \ell')} \int_R^\infty r^{d-1} r^{2(\ell'-1)} (\ell')^2 dr,$$

which simplifies to

 \begin{align*} -\frac{\ell^2 + \ell(d-2) + (\ell')^2  }{2 \ell' + (d - 2)} R^{2\ell + d - 2} &= -\frac{\ell^2 + \ell(d-2) + \bigl(\ell + (d-2)\bigr)^2  }{2 (-\ell-(d-2)) + (d - 2)} R^{2\ell + d - 2} \\
 & = -\frac{2 \ell^2 + 3 \ell(d-2) + (d-2)^2}{- 2\ell - (d-2)} R^{2\ell + d - 2} \\
 &= (\ell + d-2)R^{2\ell + d - 2}.
 \end{align*}

Combining the inside and outside computations in the case $R=1$, we find that the harmonic extension $\tilde f$ of the function given by $g$ on $S^{d-1}$ has Dirichlet energy $2\ell + (d - 2)$.  If we decompose the GFF into an orthonormal basis that includes this $\tilde f$, we find that the component of $\tilde f$ is a centered Gaussian with variance $\frac{1}{2\ell + (d-2)}$.  If we replace $\tilde f$ with the harmonic extension of $g(R^{-1} \theta)$ (defined on $\partial B_R(0)$), then by scaling the corresponding variance becomes $\frac{1}{2\ell + (d-2)} R^{2-d}$.

Now in the augmented GFF the variance is instead given by \eqref{e.l2norm}, which amounts to replacing  $\frac{1}{2\ell + (d-2)}$ with $\frac{1}{2\ell + d}$.  Considering the component of  $g(R^{-1} \theta)$ in a basis expansion the space of functions on $\partial B_R(0)$ requires us to divide \eqref{e.l2norm} by $R^{2\ell}$ (to account for the scaling of $f$) and by $(R^{d-1})^2$ (to account for the larger integration area), so that we again obtain a variance of $\frac{1}{2\ell + d}R^{2-d}$ for the augmented GFF, versus $\frac{1}{2\ell + (d-2)} R^{2-d}$ for the GFF.

In some ways, the augmented GFF is very similar to the ordinary GFF: when we restrict attention to an origin-centered annulus, it is possible to construct independent Gaussian random distributions $h_1$, $h_2$, and $h_3$ such that $h_1$ has the law of a constant multiple of the GFF, $h_1+h_2$ has the law of the augmented GFF, and $h_1+h_2+h_3$ has the law of the ordinary GFF.

In light of Theorem \ref{t.fluctuations}, the following implies that (up to absolute continuity) the scaling limit of fixed-time $A_t$ fluctuations can be described by the GFF itself.

\begin{prop} \label{p.abscont}
When $d=2$, the law $\nu$ of the restriction of the GFF to the unit circle (modulo additive constant) is absolutely continuous w.r.t.\ the law $\mu$ of the restriction of the augmented GFF restricted to the unit circle.
\end{prop}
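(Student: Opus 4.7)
The plan is to exploit the fact that both $\mu$ and $\nu$ are mean-zero Gaussian measures that are simultaneously diagonalized by the trigonometric basis on $S^1$, and then invoke Kakutani's dichotomy for product Gaussian measures.

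First, I would make the diagonal decompositions explicit. As indicated around \eqref{eq:fourier}, the restriction of the augmented GFF to the unit circle (modulo additive constant) is represented, in the $L^2(S^1)$-orthonormal basis $\{\pi^{-1/2}\cos k\theta,\pi^{-1/2}\sin k\theta\}_{k \geq 1}$, by two independent sequences of independent centered Gaussian coefficients of common variance $\sigma_k^2 = 1/(2(k+1))$; the ordinary GFF (modulo additive constant) admits the same decomposition but with $\tau_k^2 = 1/(2k)$. Both series converge as random elements of, say, $H^{-s}(S^1)/\R$ for any $s>0$, since $\sum_k k^{-1-2s}<\infty$, so $\mu$ and $\nu$ are bona fide Gaussian measures on this separable Hilbert space, and they are the joint laws of the sequences $\{(\alpha_k,\beta_k)\}$ of Fourier coefficients with the stated variances.

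Since under either measure the Fourier coefficients are mutually independent, Kakutani's dichotomy applies: $\mu$ and $\nu$ are either mutually singular or mutually absolutely continuous, the latter holding precisely when the infinite product of Hellinger affinities is positive, equivalently when
$$\sum_{k \geq 1} \left(1 - \frac{2\sigma_k \tau_k}{\sigma_k^2 + \tau_k^2}\right) < \infty.$$
Setting $r_k := \tau_k/\sigma_k = \sqrt{(k+1)/k} = 1 + O(1/k)$, the $k$-th summand equals $(1-r_k)^2/(1+r_k^2) = O(1/k^2)$; hence the sum converges and $\nu$ and $\mu$ are in fact equivalent, which is stronger than $\nu \ll \mu$. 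If desired, one can exhibit the Radon--Nikodym derivative as the a.s.\ convergent product $\prod_k (\tau_k/\sigma_k)^2 \exp\bigl(-\tfrac{1}{2}(\sigma_k^{-2}-\tau_k^{-2})(\alpha_k^2+\beta_k^2)\bigr)$ of ratios of one-dimensional Gaussian densities.

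The only real step to check is that the spherical-harmonic description of the augmented GFF in \textsection\ref{ss.augmentedGFF} indeed reproduces the Fourier representation \eqref{eq:fourier} when $d=2$ and $R=1$: the variance formula $1/(2\ell + d) = 1/(2(k+1))$ attached to each two-dimensional space $H_k$ of degree-$k$ spherical harmonics yields precisely two independent Gaussians of variance $1/(2(k+1))$ on the $L^2$-orthonormal basis $\pi^{-1/2}\cos k\theta$, $\pi^{-1/2}\sin k\theta$, so one obtains exactly the series written in \eqref{eq:fourier}. This compatibility check is the only nontrivial step; the remainder is a direct application of the Kakutani criterion, and the near-telescoping estimate $(1-\sqrt{(k+1)/k})^2 = O(k^{-2})$ is what makes the argument work in $d=2$.
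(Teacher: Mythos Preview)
Your argument is correct and yields the same conclusion as the paper, but the route is genuinely different. The paper computes the \emph{relative entropy} (Kullback--Leibler divergence) between the two centered Gaussians in each Fourier mode: writing $F(\sigma)$ for the entropy of $N(0,1)$ with respect to $N(0,\sigma^2)$, it observes $F'(1)=0$, so the per-mode entropy is $O((1-\tau_k/\sigma_k)^2)=O(k^{-2})$; summing over modes gives finite total relative entropy in both directions, hence mutual absolute continuity. You instead invoke Kakutani's dichotomy via the Hellinger affinity, showing $\sum_k (1-r_k)^2/(1+r_k^2)<\infty$ with $r_k=\tau_k/\sigma_k=1+O(1/k)$. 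Both arguments ultimately rest on the same estimate---that the variance ratio deviates from $1$ by $O(1/k)$, so the relevant ``distance'' per mode is $O(1/k^2)$ and hence summable---and both deliver equivalence of $\mu$ and $\nu$, not just $\nu\ll\mu$. Your approach has the minor advantage of coming packaged with a clean dichotomy (equivalent or mutually singular), while the paper's entropy computation is slightly more elementary and avoids naming Kakutani's theorem explicitly.
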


\begin{proof}
The relative
entropy of a Gaussian of density $e^{-x^2/2}$ with respect to a
Gaussian of density $\sigma^{-1} e^{-x^2/(2\sigma^2)}$ is given by
$$F(\sigma) = \int e^{-x^2/2} \left( (\sigma^{-2}-1)x^2/2 + \log \sigma \right) dx = (\sigma^{-2}-1)/2 + \log \sigma.$$
Note that $F'(\sigma) = -\sigma^{-3} + \sigma^{-1}$, and in particular $F'(1) = 0$.
Thus the relative entropy of a centered
Gaussian of variance $1$ with respect to a centered Gaussian of variance $1+a$ is $O(a^2)$.  This
implies that the relative entropy of $\mu$ with respect to $\nu$ --- restricted to the $j$th component $\alpha_j$ ---
is $O(j^{-2})$.  The same holds for the relative entropy of $\nu$ with respect to $\mu$.
Because the $\alpha_j$ are independent in both $\mu$ and $\nu$,
the relative entropy of one of $\mu$ and $\nu$ with respect to the other is the sum of
the relative entropies of the individual components, and this sum is finite.
\end{proof}

\section{General dimension}
\subsection{FKG inequality: Proof of Theorem \ref{t.fkg}}
We recall that increasing functions of a Poisson point process are non-negatively correlated \cite{GK}.  (This is easily derived from the more well known statement \cite{FKG} that increasing functions of independent Bernoulli random variables are non-negatively correlated.)  Let $\mu$ be the simple random walk probability measure on the space $\Omega'$ of walks $W$ beginning at the origin.  Then the randomness for internal DLA is given by a rate-one Poisson point process on $\mu \times \nu$ where $\nu$ is Lebesgue measure on $[0,\infty)$.  A realization of this process is a random collection of points in $\Omega' \times [0,\infty)$.  It is easy to see (for example, using the abelian property of internal DLA discovered by Diaconis and Fulton \cite{DF}) that adding an additional point $(w,s)$ increases the value of $A_{T(t)}$ for all times $t$.  The $A_{T(t)}$ are hence increasing functions of the Poisson point process, and are non-negatively correlated.  Since $F$ and $G$ are increasing functions of the $A_{T(t)}$, they are also increasing functions of the point process --- and are thus non-negatively correlated.

\subsection{Discrete harmonic polynomials}
\label{s.polynomials}

Let $\psi(x_1,\ldots,x_d)$ be a polynomial that is harmonic on $\R^d$, that is
	\[ \sum_{i=1}^d \frac{\partial^2 \psi}{\partial x_i^2} = 0. \]
In this section we give a recipe for constructing a polynomial $\psi_1$ that closely approximates $\psi$ and is discrete harmonic on $\Z^d$, that is,
	\[ \sum_{i=1}^d D_i^2 \psi_1 = 0  \]
where
	\[ D_i^2 \psi_1 = \psi_1(x+\basis_i) - 2 \psi_1(x) + \psi_1(x-\basis_i) \]
is the symmetric second difference in direction $\basis_i$.  The construction described below is nearly the same as the one given by Lov\'{a}sz in~\cite{Lovasz}, except that we have tweaked it in order to obtain a smaller error term: if $\psi$ has degree $k$, then $\psi - \psi_1$ has degree at most $k-2$ instead of $k-1$.
 Discrete harmonic polynomials have been studied classically, primarily in two variables: see for example Duffin \cite{Duffin}, who gives a construction based on discrete contour integration.

Consider the linear map
	\[ \Xi : \R[x_1,\ldots,x_d] \to \R[x_1,\ldots,x_d] \]
defined on monomials by
	 \[ \Xi ( x_1^{k_1} \cdots x_d^{k_d} ) = P_{k_1}(x_1) \cdots P_{k_d}(x_d) \]
where we define
	\[ P_{k}(x) = \prod_{j=-(k-1)/2}^{(k-1)/2} (x+j). \]

\begin{lemma}\label{discretepolynomial}
If $\psi \in \R[x_1,\ldots,x_d]$ is a polynomial of degree $k$ that is harmonic on $\R^d$, then the polynomial $\psi_1 = \Xi(\psi)$ is discrete harmonic on $\Z^d$, and $\psi - \psi_1$ is a polynomial of degree at most $k-2$.
\end{lemma}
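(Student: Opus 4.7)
\medskip

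\noindent\textbf{Proof plan for Lemma \ref{discretepolynomial}.}

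The plan is to reduce everything to a one-variable identity involving $P_k$, and then exploit the fact that $\Xi$ is multiplicative on products of one-variable monomials.

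\emph{Step 1: the key one-variable identity $D^2 P_k(x) = k(k-1) P_{k-2}(x)$.}
Set $y = x + (k-1)/2$. Unpacking the product shows that $P_k(x) = y(y-1)(y-2)\cdots(y-k+1) = y^{(k)}$, the classical falling factorial in $y$.  For the forward difference $\Delta f(y) = f(y+1)-f(y)$, the well-known identity $\Delta y^{(k)} = k\, y^{(k-1)}$ follows from factoring out $y(y-1)\cdots(y-k+2)$.  Writing the symmetric second difference as $D^2 f(x) = [f(x+1)-f(x)] - [f(x)-f(x-1)]$ and applying this identity twice, first to $y^{(k)}$ (yielding $k\,y^{(k-1)}$ and $k(y-1)^{(k-1)}$) and then to $y^{(k-1)}$, gives $D^2 P_k(x) = k(k-1)(y-1)^{(k-2)}$.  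Since $y - 1 = x + (k-3)/2$, the right-hand side is precisely $k(k-1) P_{k-2}(x)$. (Boundary cases $k=0,1$ are consistent because of the $k(k-1)$ factor.)

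\emph{Step 2: $\Xi$ intertwines $\partial_i^2$ with $D_i^2$.}
Because $\Xi$ sends a monomial to a product of one-variable polynomials, one factor per coordinate, the identity of Step 1 gives, for each monomial $x_1^{k_1}\cdots x_d^{k_d}$,
\[ D_i^2\,\Xi(x_1^{k_1}\cdots x_d^{k_d}) = k_i(k_i-1)\, P_{k_1}(x_1)\cdots P_{k_i-2}(x_i) \cdots P_{k_d}(x_d) = \Xi\!\left(\tfrac{\partial^2}{\partial x_i^2} x_1^{k_1}\cdots x_d^{k_d}\right). \]
By linearity, $D_i^2 \circ \Xi = \Xi \circ \partial_i^2$ as operators on $\R[x_1,\ldots,x_d]$.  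Summing over $i$ gives $\sum_i D_i^2 \Xi(\psi) = \Xi(\Delta\psi)$, which is $0$ whenever $\psi$ is harmonic.  Hence $\psi_1 = \Xi(\psi)$ is discrete harmonic.

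\emph{Step 3: the degree bound.}
The coefficient of $x^{k-1}$ in $P_k(x) = \prod_{j=-(k-1)/2}^{(k-1)/2}(x+j)$ is $\sum_{j=-(k-1)/2}^{(k-1)/2} j$, which vanishes by symmetry.  Thus $P_k(x) = x^k + r_k(x)$ with $\deg r_k \le k-2$ (taking $r_k = 0$ when $k \le 1$).  For a monomial $x_1^{k_1}\cdots x_d^{k_d}$ of total degree $k$, expanding
\[ \Xi(x_1^{k_1}\cdots x_d^{k_d}) - x_1^{k_1}\cdots x_d^{k_d} = \prod_i (x_i^{k_i} + r_i(x_i)) - \prod_i x_i^{k_i} \]
gives a sum of cross terms in which at least one $x_i^{k_i}$ has been replaced by $r_i(x_i)$; each such replacement drops the total degree by at least $2$, so every term has degree at most $k-2$.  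Linearity finishes the degree claim.

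\emph{Expected difficulty.}
The only genuine computation is Step 1, where one must discover the right change of variable $y = x + (k-1)/2$ that turns the symmetric product $P_k$ into an ordinary falling factorial; once that substitution is in hand, the rest of the proof is essentially formal, with Steps 2 and 3 being routine consequences of the multiplicative structure of $\Xi$ and the vanishing of the $x^{k-1}$ coefficient of $P_k$.
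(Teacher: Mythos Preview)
Your proof is correct and follows essentially the same three-step structure as the paper's: establish $D^2 P_k = k(k-1)P_{k-2}$, deduce the intertwining $D_i^2 \circ \Xi = \Xi \circ \partial_i^2$, and then check $P_k(x) - x^k$ has degree at most $k-2$. The only cosmetic differences are that the paper leaves Step~1 as ``an easy calculation'' (your falling-factorial substitution is a clean way to carry it out) and in Step~3 the paper invokes the parity of $P_k$ rather than the vanishing of the sum of its roots.
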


\begin{proof}
An easy calculation shows that
	\[ D^2 P_k = k(k-1) P_{k-2} \]
from which we see that
	\[ D_i^2 \Xi [\psi] = \Xi [ \frac{\partial^2}{\partial x_i^2} \psi ]. \]
If $\psi$ is harmonic, then the right side vanishes when summed over $i=1,\ldots,d$, which shows that $\Xi [\psi]$ is discrete harmonic.

Note that $P_k(x)$ is even for $k$ even and odd for $k$ odd.  In particular, $P_k(x) - x^k$
has degree at most $k-2$, which implies that $\psi -\psi_1$ has degree at most $k-2$.
\end{proof}

To obtain a discrete harmonic polynomial $\psi_{(m)}$ on the lattice $\frac{1}{m} \Z^d$, we set
	\[ \psi_{(m)}(x) := m^{-k} \psi_1(mx), \]
where $k$ is the degree of $\psi$.

\subsection{General-dimensional CLT: Proof of Theorem \ref{t.highdconvergence}}

\proofof{Theorem \ref{t.highdconvergence}}
For each fixed $\psi$ with $\psi(0)=0$ the process $M(t) := \Phi_A^m(\psi,t)$ is a martingale in $t$.  Each time a new particle is added, we can imagine that it performs Brownian motion on the grid (instead of a simple random walk), which turns $M$ into a continuous martingale, as in \cite{JLS}.  By the martingale representation theorem (see~\cite[Theorem~V.1.6]{RY}), we can write $M(t) = \beta(s_m(t))$, where $\beta$ is a standard Brownian motion and 
	\[ s_m(t) := \limsup_{\substack{0=t_0<t_1<\cdots<t_{n}=t \\ |t_{i+1}-t_i|\to 0}} \; \sum_{i=0}^{n-1} (M(t_{i+1}) - M(t_i))^2 \]
is the quadratic variation of $M$ on the interval $[0,t]$.  To show that $\Phi_A^m(\psi,t)$ converges in law as $m \to \infty$ to a Gaussian with variance $V:= \int_{B_{r(t)}(0)} \psi(x)^2dx$, it suffices to show that for fixed~$t$ the random variable $s_m(t)$ converges in law to~$V$.

By standard Riemann integration and the $A_t$ fluctuation bounds in \cite{JLS, JLS2} (the weaker bounds of \cite{LBG} would also suffice here) we know that $m^{-d} \sum_{x \in A_{tm^d}} \psi_{(m)}(x/m)^2$ converges in law to $\int_{B_{r(t)}} \psi(x)^2dx$ as $m \to \infty$.  Thus it suffices to show that \begin{equation} \label{e.smt} s_m(t) - m^{-d} \sum_{x \in A_{tm^d}} \psi_{(m)}(x/m)^2\end{equation} converges in law to zero.  This expression is actually a martingale in $t$.  Its expected square is the sum of the expectations of the squares of its increments, each of which is $O(m^{-2d})$.  The overall expected square of \eqref{e.smt} is thus $O(m^{-d})$, which indeed tends to zero.

Recall \eqref{e.phia} and note that if we replace $t$ with $T(t)$, this does not change the convergence in law of $s_m(t)$ when $\psi(0)=0$.  However, when $\psi(0) \not = 0$, it introduces an asymptotically independent source or randomness which scales to a Gaussian of variance $\psi(0)^2 t$ (simply by the central limit theorem for the Poisson point process), and hence \eqref{e.var} remains correct in this case.

Similarly, suppose we are given $0 = t_0< t_1 < t_2 < \ldots < t_\ell$ and functions $\psi_1, \psi_2, \ldots \psi_\ell$.  The same argument as above, using the martingale
in $t$,
$$
m^{-d/2} \sum_{j=1}^\ell \sum_{A_{T(m^d(t\wedge t_j))}} \psi_{j,(m)}(x/m) 
- t\psi_{j,(m)}(0),
$$
implies that $\sum_{i=1}^\ell \Phi_A^m(\psi_j,t_j)$ converges in law to a Gaussian with variance
$$\sum_{j=1}^{\ell} \int_{B_{r(t_j)} \setminus B_{r(t_{j-1})}} \left( \sum_{i=j}^\ell \psi_i(x) \right)^2dx.$$
The theorem now follows from a standard fact about Gaussian random variables on a finite dimensional vector spaces (proved using characteristic functions): namely, a sequence of random variables on a vector space converges in law to a multivariate Gaussian if and only if all of the one-dimensional projections converge.  The law of~$h$ is determined by the fact that it is a centered Gaussian with covariance given by \eqref{e.cov}.
\qed


\section{Dimension two}

\subsection{Two dimensional CLT: Proof of Theorem~\ref{t.lateness}}

Recall that $A_t$ for $t \in \Z_{+}$ denotes the discrete-time IDLA cluster with exactly $t$ sites, and $A_T = A_{T(t)}$ for $t \in \R_{+}$ denotes the continuous-time cluster whose cardinality is Poisson-distributed with mean $t$.

Define
\[
F_0(t) : = \inf \{t: z\in A_t\}
\]
and
\[
L_0(z) := \sqrt{F_0(z)/\pi} - |z|.
\]
Fix $N<\infty$, and consider a test function of the form
\[
\f(re^{i\theta}) = \sum_{|k| \le N} a_k(r) e^{ik\theta}
\]
where the $a_k$ are smooth functions supported in an interval $0 < r_0 \le r \le r_1 <\infty$.
We will assume, furthermore, that $\f$ is real-valued.  That is, the complex numbers $a_k$
satisfy
\[
a_{-k}(r) = \overline{a_k(r)}
\]

\begin{theorem}\label{gffdisk0}  As $R\to \infty$,
\[
\frac{1}{R^2} \sum_{z\in (\Z + i\Z)/R}  L_0(Rz) \frac{\phi(z)}{|z|^2} \longrightarrow N(0,V_0)
\]
in law, where
\[
V_0 =  \sum_{0 < |k| \le N}
\ 2\pi \int_0^\infty \left| \int_\rho^\infty  a_k(r) (\rho/r)^{|k|+1} \frac{dr}{r}\right|^2 \frac{d\rho}{\rho}.
\]
\end{theorem}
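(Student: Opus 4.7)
The plan is to convert the weighted sum of the lateness function into a time integral of signed indicator sums, decompose by angular Fourier mode, and recognize each nonzero mode as (essentially) a discrete harmonic polynomial martingale of the type used in the proof of Theorem~\ref{t.highdconvergence}; the variance of the resulting Gaussian limit is then obtained by an It\^o isometry.

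The starting point is the layer-cake identity
\[
L_0(w)=\int_0^\infty\bigl(\mathbf 1_{\{w\in B_\rho\}}-\mathbf 1_{\{w\in A_{\pi\rho^2}\}}\bigr)\,d\rho,
\]
which is verified by splitting on the sign of $L_0(w)$. Summing $w=Rz$ and exchanging sum and integral reduces the quantity in Theorem~\ref{gffdisk0} to
\[
-\int_0^\infty \sum_{w\in\Z^2}\bigl(\mathbf 1_{\{w\in A_{\pi\rho^2}\}}-\mathbf 1_{\{w\in B_\rho\}}\bigr)\,\frac{\phi(w/R)}{|w|^2}\,d\rho.
\]
Expanding $\phi(re^{i\theta})=\sum_{|k|\le N}a_k(r)e^{ik\theta}$ and using $e^{ik\theta(w)}/|w|^2=w^k/|w|^{k+2}$ for $k\ge 0$ (and the conjugate version for $k<0$), the $k$-th angular mode of the inner sum equals, up to controllable errors,
\[
\frac{a_k(\rho/R)}{\rho^{k+2}}\,\tilde M_k(\pi\rho^2), \qquad \tilde M_k(t):=\sum_{w\in A_t}\psi_k(w)-t\psi_k(0),
\]
where $\psi_k$ is the discrete harmonic polynomial approximating $w^k$ from Lemma~\ref{discretepolynomial}. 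The three error sources are: (i) $A_{\pi\rho^2}\triangle B_\rho\subset\{||w|-\rho|\le C\log\rho\}$ with high probability by \cite{JLS,JLS2}, so $|w|^{k+2}$ may be replaced by $\rho^{k+2}$ on the symmetric difference; (ii) $w^k-\psi_k$ has degree at most $k-2$, hence contributes $O(\rho^k)$ to the unnormalized sum while $\tilde M_k(\pi\rho^2)$ has size $\sim\rho^{k+1}$; (iii) $\sum_{w\in B_\rho}\psi_k(w)$ for $k\ne 0$ is lower order by rotational/Gauss-circle cancellation. The $k=0$ mode is purely deterministic (because $\#A_t=t$ in discrete time) and integrates to $o(1)$ after division by $R^2$, so it contributes neither to mean nor variance of the limit.

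Changing variables $\rho=Rr$ and setting $\hat M_k^R(r):=R^{-(k+1)}\tilde M_k(\pi R^2r^2)$, each angular mode becomes $\int_0^\infty a_k(r)\,r^{-(k+2)}\,\hat M_k^R(r)\,dr$. The martingale-representation argument in the proof of Theorem~\ref{t.highdconvergence} (applied to discrete rather than continuous time, which only perturbs the quadratic variation by a vanishing error) shows that $\hat M_k^R$ converges jointly in $k$ to independent complex Gaussian martingales $\hat M_k^\infty$, whose real and imaginary parts are independent real Brownian motions in the time change $\tau_k(r):=\pi r^{2(k+1)}/(2(k+1))$, coming from $\int_{B_r}(\Re z^k)^2$ and $\int_{B_r}(\Im z^k)^2$. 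By a stochastic Fubini,
\[
\int_0^\infty \frac{a_k(r)}{r^{k+2}}\,\hat M_k^\infty(r)\,dr = \int_0^\infty F_k(r)\,d\hat M_k^\infty(r), \qquad F_k(r):=\int_r^\infty \frac{a_k(s)}{s^{k+2}}\,ds,
\]
so It\^o isometry on real and imaginary parts separately gives variance $4\pi\int_0^\infty r^{2k+1}|F_k(r)|^2\,dr$ for the combined contribution of modes $\pm k$. Writing $|F_k(r)|^2=r^{-2(k+1)}\bigl|\int_r^\infty a_k(s)(r/s)^{k+1}\,ds/s\bigr|^2$ and using $a_{-k}=\overline{a_k}$ (which makes the $+k$ and $-k$ summands in the theorem equal), this matches $V_k+V_{-k}$, and summing over $k>0$ yields exactly $V_0$.

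The main obstacle is the reduction in the second paragraph: passing from the raw indicator sum to the martingale $\tilde M_k(\pi\rho^2)$ with errors that remain small after integration in $\rho$ over an interval of length $\sim R$. This requires combining the inner/outer radius bounds from \cite{JLS,JLS2} (for the support of $A_{\pi\rho^2}\triangle B_\rho$), classical lattice-point bounds controlling $\sum_{w\in B_\rho}w^k$, and the degree-$(k-2)$ polynomial error from Lemma~\ref{discretepolynomial}, and then showing tightness of $\hat M_k^R$ on compact sub-intervals of $(0,\infty)$ so that integration against the smooth weight $a_k(r)/r^{k+2}$ commutes with the weak limit. Joint convergence across Fourier modes $k$ and independence in the limit follow from the asymptotic orthogonality of distinct harmonic polynomials in $L^2(B_r(0))$, exactly as in the proof of Theorem~\ref{t.highdconvergence}.
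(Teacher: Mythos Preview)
Your outline is correct and tracks the paper's argument through the layer-cake representation of $L_0$ (which, after the substitution $t=\pi\rho^2$, is exactly the paper's identity $L_0(z)=\frac{1}{2\sqrt\pi}\int_0^\infty(1_{\pi|z|^2\le t}-1_{A_t})t^{1/2}\,dt/t$), the replacement of $\phi$ by the discrete-harmonic approximants $p_k$, the logarithmic fluctuation bounds from \cite{JLS} to localize the symmetric difference $A_{\pi\rho^2}\triangle B_\rho$, and the van der Corput estimates to dispose of $\sum_{w\in B_\rho}p_k(w)$ and of the $k=0$ mode. Your variance computation also matches the paper's after the change of variable $\rho=r$.

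Where you diverge from the paper is in the martingale step. You propose to establish \emph{process-level} convergence of each rescaled mode martingale $\hat M_k^R(\cdot)$ (which requires a tightness argument you flag but do not carry out), then integrate against the smooth weight $a_k(r)r^{-(k+2)}$ via a stochastic Fubini, and read off the variance from an It\^o isometry. The paper instead forms a single scalar martingale in the particle index~$s$,
\[
M(s)=\frac{\sqrt\pi}{2}\int_0^\infty\sum_{z\in\Z+i\Z}\bigl(1_{\pi|z|^2\le t}-1_{A_{s\wedge t}}\bigr)\,\psi_0(z,t,R)\,t^{-1/2}\,\frac{dt}{t},
\]
and applies the (discrete-time) martingale CLT directly. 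The increment $M(s+1)-M(s)$ is simply minus the $t$-integral of $\psi_0(z,t,R)t^{-3/2}$ over $[F_0(z),\infty)$, where $z$ is the $(s{+}1)$st absorbed site; hence $\sum_s|M(s+1)-M(s)|^2$ is a lattice sum over $z\in\Z+i\Z$, and the paper shows it equals $V_0+O((\log R)/R)$ by three deterministic approximations (replace $p_k$ by $z^k$; replace the lower limit $F_0(z)$ by $\pi|z|^2$; replace the lattice sum by a Lebesgue integral). This sidesteps the functional-convergence and tightness issues entirely, and turns the variance identification into a Riemann-sum calculation rather than an It\^o isometry. Your route would work, but the extra process-convergence step is unnecessary once one notices that the already-integrated quantity is itself a martingale in~$s$; the paper's organization also makes the joint behavior across modes automatic, whereas you have to argue asymptotic independence separately.
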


It follows from Lemma \ref{dual} below (with $q = |k| + 1$, $y = \log r$), that Theorem~\ref{gffdisk0}
can be interpreted as saying
that $L_0(Rz)$ tends weakly to the Gaussian random
distribution associated to the Hilbert space $H_{nr}^1$ with norm
\[
\|\eta\|_0^2 = \sum_{0 < |k| <\infty} 2\pi \int_0^\infty [|r\de_r\eta_k|^2 + (|k|+1)^2|\eta_k|^2]\frac{dr}{r}
\]
where
\[
\eta_k(r) = \frac{1}{2\pi}\int_0^{2\pi}  \eta(r e^{i\theta} )e^{-ik\theta}d\theta
\]
and $\eta_0(r) \equiv 0$.  (The subscript $nr$ means nonradial:  $H_{nr}^1$ is the orthogonal complement of radial functions in the Sobolev space $H^1$.)

\begin{lemma}\label{dual} Let $q\ge0$ and let $\psi$ be a real valued function on $\R$.  Denote
\[
\|\psi\|_q = \sup \int_{-\infty}^\infty \psi(y) f(y) dy
\]
where the supremum is over real-valued $f$,  compactly supported and subject to the constraint
\[
\int_{-\infty}^\infty (f'(y)^2 + q^2 f(y)^2)dy \le 1.
\]
Then
\[
\|\psi\|_q^2 =  \int_{-\infty}^\infty \left|\int_s^\infty \psi(y)e^{q(s-y)}dy\right|^2 ds.
\]
\end{lemma}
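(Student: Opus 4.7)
The identity is a Hilbert space duality computation: the norm $\|f\|_1^2 := \int ((f')^2 + q^2 f^2)\, dy$ makes the test functions into (the dense part of) a Hilbert space, and $\|\psi\|_q$ is precisely the norm of the linear functional $f \mapsto \int \psi f$ on its dual. The plan is to build an explicit isometry from this Hilbert space onto $L^2(\R)$, transport the functional across, and apply Cauchy--Schwarz.

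First I would introduce the auxiliary function $v := f' + qf$. For compactly supported $f$, integration by parts gives $2q \int f f'\, dy = q [f^2]_{-\infty}^\infty = 0$, so the cross term vanishes and
\[ \int v(y)^2\, dy = \int \bigl((f'(y))^2 + q^2 f(y)^2\bigr)\, dy = \|f\|_1^2. \]
Hence $f \mapsto v$ embeds the test function space isometrically into $L^2(\R)$. Conversely, inverting the first-order ODE $f' + qf = v$ with the integrating factor $e^{qy}$ and the decay condition $f(y) \to 0$ as $y \to -\infty$, one recovers $f(y) = \int_{-\infty}^y e^{q(s-y)} v(s)\, ds$.

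Substituting this into the linear functional and interchanging the order of integration over $\{s \le y\}$ yields
\[ \int_{-\infty}^\infty \psi(y) f(y)\, dy = \int_{-\infty}^\infty v(s) G(s)\, ds, \qquad G(s) := \int_s^\infty \psi(y) e^{q(s-y)}\, dy. \]
Cauchy--Schwarz in $L^2(\R)$ then bounds this by $\|v\|_{L^2} \|G\|_{L^2} \le \|G\|_{L^2}$, which gives $\|\psi\|_q \le \|G\|_{L^2}$. Sharpness comes from choosing $v = G / \|G\|_{L^2}$ and rebuilding the optimal $f$ via the formula above, reducing the lemma to the identity $\|G\|_{L^2}^2 = \int |\int_s^\infty \psi(y) e^{q(s-y)} dy|^2\, ds$, which is the desired right-hand side.

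The one subtle step, and the main technical obstacle, is the density/truncation argument: the extremal $f$ produced above need not be compactly supported, so one must approximate it in $\|\cdot\|_1$ by compactly supported $f_n$ while preserving $\int \psi f_n \to \int \psi f$. For $q > 0$ and $\psi$ compactly supported (the relevant case in the application), $G(s)$ vanishes to the right of $\mathrm{supp}\,\psi$ and decays like $e^{qs}$ as $s \to -\infty$, which forces both $v$ and $f$ to lie in $L^2$ and vanish at $\pm\infty$; a standard smooth cutoff completes the approximation. The degenerate case $q = 0$ requires either $\int \psi = 0$ or passing to functions modulo additive constants, but this case is not invoked in the application, where $q = |k|+1 \ge 2$ on the summation range $0 < |k| \le N$.
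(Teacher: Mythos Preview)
Your argument is correct and is essentially the same as the paper's: the paper's two-line sketch says to write $f(y)=\int_{-\infty}^y f'(s)\,ds$, swap integrals, and apply Cauchy--Schwarz for $q=0$, then ``multiply by the appropriate factors $e^{qy}$ and $e^{2qy}$'' for $q>0$; your substitution $v=f'+qf$ with inverse $f(y)=\int_{-\infty}^y e^{q(s-y)}v(s)\,ds$ is precisely what those exponential factors accomplish, handled directly for all $q\ge 0$ at once. Your added discussion of the approximation step for sharpness and the caveat about $q=0$ go beyond what the paper writes out, but the core idea is the same.
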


\begin{proof}
In the case $q=0$, replace $f$ in $\displaystyle \int \psi f \, dy$ with
\[
f(y) = \int_{-\infty}^y f'(s)ds
\]
change order of integration and apply the Cauchy-Schwarz inequality.
For the case $q>0$, multiply by the appropriate factors $e^{qy}$ and
$e^{2qy}$ to deduce this from the case $q=0$.
\end{proof}

If we use $A_T$ and corresponding functions $F(z)$ and $L(z)$, then the $a_0$ coefficient
figures in the limit formula as follows.
\begin{theorem}\label{gffdisk1}  As $R\to \infty$,
\[
\frac{1}{R^2} \sum_{z\in (\Z + i\Z)/R}  L(Rz) \frac{\phi(z)}{|z|^2} \longrightarrow N(0,V)
\]
in law, where
\[
V =  \sum_{|k| \le N} 2\pi \int_0^\infty \left| \int_\rho^\infty a_k(r) (\rho/r)^{|k|+1} \frac{dr}{r}\right|^2 \frac{d\rho}{\rho}.
\]
\end{theorem}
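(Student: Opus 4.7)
The plan is to reduce Theorem~\ref{gffdisk1} to Theorem~\ref{gffdisk0} by showing that the only new contribution, relative to the discrete-time setting, comes from the Poisson clock, and that this contribution accounts precisely for the $k=0$ summand of $V$ (which is the only summand not already present in $V_0$). Decompose
\[
L(w) \;=\; L_0(w) \;+\; \bigl(L(w)-L_0(w)\bigr),
\]
so that the sum in the theorem splits as $S_0^R + \Delta^R$. Theorem~\ref{gffdisk0} directly gives $S_0^R \to N(0,V_0)$ in law. It remains to show that $\Delta^R$ converges in law to an independent centered Gaussian whose variance equals the $k=0$ summand of $V$.

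Let $\tau_n$ denote the $n$-th arrival time of the Poisson clock driving $T(\cdot)$ and let $z_n$ be the site added to the discrete IDLA at step $n$; by construction $F(w)=\tau_{F_0(w)}$. A first-order Taylor expansion of $\sqrt{\cdot/\pi}$ gives
\[
L(w)-L_0(w) \;=\; \frac{\tau_{F_0(w)}-F_0(w)}{2\pi|w|} + O\!\left(\frac{(\tau_{F_0(w)}-F_0(w))^2}{|w|^3}\right),
\]
and the quadratic remainder summed over $|w|\sim R$ is $O(1/R)\to 0$. Reindexing by $n=F_0(w)$ and writing $\tau_n-n=\sum_{j\le n}(X_j-1)$ with $X_j$ i.i.d.\ $\mathrm{Exp}(1)$, Abel summation gives
\[
\Delta^R \;\approx\; \sum_{j\ge 1}(X_j-1)\,W_j^R, \qquad W_j^R := \sum_{n\ge j}\frac{\phi(z_n/R)}{2\pi|z_n|^3}.
\]
Because the clock $\{X_j\}$ is independent of the discrete IDLA, conditionally on $\{z_n\}$ this is a weighted sum of independent centered random variables with variance $\sum_j (W_j^R)^2$. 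Using the circularity bounds of \cite{JLS,JLS2} to approximate $z_n$ as uniform on the circle of radius $\sqrt{n/\pi}$, so that the angular average of $\phi$ picks off $a_0$, one finds, for $j=\pi\rho^2 R^2$,
\[
W_j^R \;\approx\; \frac{1}{R}\int_\rho^\infty \frac{a_0(r)}{r^2}\,dr.
\]
A Riemann-sum computation with $dj = 2\pi\rho R^2\,d\rho$ yields
\[
\sum_j (W_j^R)^2 \;\longrightarrow\; 2\pi\int_0^\infty \rho\,\Bigl|\int_\rho^\infty\frac{a_0(r)}{r^2}\,dr\Bigr|^2 d\rho,
\]
which is exactly the $k=0$ summand of $V$. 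The Lindeberg condition is immediate since $\max_j|W_j^R|=O(1/R)$, so the conditional CLT gives the desired Gaussian limit for $\Delta^R$.

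Because $\{X_j\}$ is independent of the discrete IDLA cluster, the conditional convergence of $\Delta^R$ lifts to unconditional convergence and to joint convergence of $(S_0^R,\Delta^R)$ to a pair of independent Gaussians, whose variances add up to $V$. The main obstacle is to justify the Riemann-sum convergence above \emph{in probability} rather than merely in law: one needs a quantitative version of the circularity of $A_t$, strong enough to control $\sum_j(W_j^R)^2$ uniformly along the random trajectory $\{z_n\}$. The fluctuation bounds of \cite{JLS,JLS2} supply precisely this control.
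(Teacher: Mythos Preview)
Your approach is essentially correct but genuinely different from the paper's. The paper does \emph{not} decompose $L=L_0+(L-L_0)$ and reuse Theorem~\ref{gffdisk0}. Instead it rewrites the whole sum (including the $k=0$ piece) as a single continuous-time martingale
\[
M(s)=\frac{\sqrt\pi}{2}\int_0^\infty\Bigl[(s\wedge t-\#A_{T(s\wedge t)})\,a_0(\sqrt{t/\pi R^2})+\sum_z(1_{\pi|z|^2\le t}-1_{A_{T(s\wedge t)}})\psi_0(z,t,R)\Bigr]t^{-1/2}\frac{dt}{t},
\]
and then applies the martingale representation theorem (not the martingale CLT), showing directly that the quadratic variation on $[0,\infty)$ equals $V+O(R^{-1/3})$. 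Thus the Poisson randomness and the IDLA randomness are handled together inside one martingale; independence of the $k=0$ piece from the $k\ne 0$ pieces emerges from an orthogonality computation in the quadratic variation, rather than from a structural decomposition.

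Your route is more modular: you isolate the Poisson correction $\Delta^R=\sum_j(X_j-1)W_j^R$, exploit the independence of the clock from the walks to get asymptotic independence of $(S_0^R,\Delta^R)$, and recover the $k=0$ summand of $V$ from the Riemann-sum limit of $\sum_j(W_j^R)^2$. This has the advantage of reusing Theorem~\ref{gffdisk0} as a black box; the cost is that you must justify the conditional CLT and, more delicately, the convergence of $\sum_j(W_j^R)^2$ to a \emph{deterministic} constant (so that the conditional limit does not depend on the IDLA realization and the joint limit factors). The $O(\log R)$ fluctuation bounds do give this: the error in replacing $\{F_0(z)\ge j\}$ by $\{|z|\ge\sqrt{j/\pi}\}$ affects $W_j^R$ at order $(\log R)/R^2$ against a main term of order $1/R$, so the error in $\sum_j(W_j^R)^2$ is $O((\log R)/R)$. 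Your Taylor remainder bound is also fine once one uses $|\tau_n-n|=O(R\log R)$ a.s.\ for $n\le c_2R^2$. The paper's unified-martingale argument avoids these extra bookkeeping steps but requires setting up the continuous-time martingale and computing its quadratic variation from scratch.
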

Theorem~\ref{gffdisk1} is a restatement of Theorem \ref{t.lateness}.  (As in the proof
of Theorem~\ref{t.highdconvergence}, the convergence in law of all one-dimensional
projections to the appropriate normal random variables implies the corresponding result for the joint distribution of any finite collection
of such projections.)   Theorem~\ref{gffdisk1}  says that
$L(Rz)$ tends weakly to a Gaussian distribution for the Hilbert space $H^1$ with the
norm
\[
\|\eta\|^2 = \sum_{k=-\infty}^\infty  2\pi \int_0^\infty [|r\de_r\eta_k|^2 + (|k|+1)^2|\eta_k|^2]\frac{dr}{r}.
\]
By way of comparison, the usual Gaussian free field is the one associated to the Dirichlet norm
\[
\int_{\R^2} |\nabla \eta|^2 dxdy = \sum_{k=-\infty}^\infty  2\pi
\int_0^\infty [|r\de_r\eta_k|^2 + k^2 |\eta_k|^2]\frac{dr}{r}.
\]
Comparing these two norms, we see that the second term in $\| \eta \|^2$ has an additional $+1$, hence our choice of the term ``augmented Gaussian free field.''  As derived in \textsection\ref{ss.augmentedGFF}, this $+1$ results in a smaller variance $\frac{1}{2\ell + d} R^{2-d}$ in each spherical mode of degree $\ell$ of the augmented GFF, as compared to $\frac{1}{2\ell+d-2} R^{2-d}$ for the usual GFF.  The surface area of the sphere is implicit in the normalization \eqref{normalization}, and is accounted for here in the factors
$2\pi$ above.

To prove Theorem~\ref{gffdisk0}, write
\begin{align*}
L_0(z) &=
\frac{1}{2\sqrt{\pi}}\int_0^\infty(1-1_{A_t})t^{1/2}\frac{dt}{t}
- \frac{1}{2\sqrt{\pi}}\int_0^\infty(1-1_{\pi|z|^2 \le t})t^{1/2}\frac{dt}{t} \\
& = \frac{1}{2\sqrt{\pi}}\int_0^\infty(1_{\pi|z|^2 \le t}-1_{A_t})t^{1/2}\frac{dt}{t}.
\end{align*}

Let $p_0(z)=1$, and for $k\geq 1$ let $p_k(z) = q_k(z)-q_k(0)$, where
	\[ q_k(z) = \Xi [z^k] \]
is the discrete harmonic polynomial associated to $z^k = (x + iy)^k$ as described in \textsection\ref{s.polynomials}. The sequence $p_k$ begins
	\[ 1, \;  z,  \; z^2, \; z^3 - \frac14 \bar{z}, \; z^4 - z \bar{z}, \; \ldots . \]
 For instance, to compute $p_3$, we expand
	\[ z^3 = x^3 - 3xy^2 + i \left[ 3x^2y - y^3 \right] \]
and apply $\Xi$ to each monomial, obtaining
	\[ p_3(z) = (x-1)x(x+1) - 3x(y-\frac12)(y+\frac12) + i \left[ 3(x-\frac12)(x+\frac12)y - (y-1)y(y+1) \right] \]
which simplifies to $z^3 - \frac14 \bar z$.  One readily checks that this defines a discrete harmonic function on $\Z + i\Z$. (In fact, $z^3$ is itself discrete harmonic, but $z^k$ is not for $k\geq 4$.)
To define $p_k$ for negative $k$, we set $p_{-k} (z) = \overline{p_k(z)}$.

Define
\[
\psi(z, t,R) = \sum_{k=-N}^N a_k(\sqrt{t/\pi R^2}) p_k(z)(\sqrt{t/\pi})^{-|k|}
\]
and
\[
\psi_0(z,t,R) = \psi(z,t,R) - a_0(\sqrt{t/\pi R^2})
\]
\begin{lemma}\label{discrete-error}  If $c_1 R^2 \le t \le c_2R^2$ and
$| |z| - \sqrt{t/\pi}| \le C\log R $, then
\[
\left|  \psi(z,t,R)- \phi(z/R) \right|
\le C (\log R) /R
\]
\end{lemma}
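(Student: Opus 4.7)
The plan is a term-by-term comparison of the Fourier expansions of $\psi(z,t,R)$ and $\phi(z/R)$ in $\theta$, writing $z = |z|e^{i\theta}$. For each $|k|\le N$, the $k$-th term of $\phi(z/R)$ equals $a_k(|z|/R)\,e^{ik\theta}$ and the $k$-th term of $\psi(z,t,R)$ equals $a_k(\sqrt{t/\pi}/R)\,p_k(z)(\sqrt{t/\pi})^{-|k|}$. My goal is to show that each pair of corresponding terms agrees up to an error $O((\log R)/R)$ and then sum over the bounded number $2N+1$ of indices. The contribution of the $k=0$ index collapses to $|a_0(|z|/R) - a_0(\sqrt{t/\pi}/R)|$ since $p_0 \equiv 1$, and this is $O((\log R)/R)$ by the smoothness of $a_0$ and the hypothesis $||z|-\sqrt{t/\pi}|\le C\log R$.

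For $k \ge 1$, Lemma~\ref{discretepolynomial} yields $q_k(z)-z^k$ of degree at most $k-2$; since $p_k = q_k - q_k(0)$, the polynomial $p_k(z)-z^k$ also has degree at most $k-2$ and vanishes at the origin, so $|p_k(z)-z^k|\le C_k|z|^{k-2}$ for $|z|$ large. Dividing by $(\sqrt{t/\pi})^k$ and using that both $\sqrt{t/\pi}$ and $|z|$ are comparable to $R$ under the hypotheses, I obtain
\[
p_k(z)(\sqrt{t/\pi})^{-k} = (z/\sqrt{t/\pi})^{k} + O(R^{-2}),
\]
and the analogous identity for $k<0$ via $p_{-k}(z) = \overline{p_k(z)}$.

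Next, I would write $(z/\sqrt{t/\pi})^k = (|z|/\sqrt{t/\pi})^k\, e^{ik\theta}$. The hypothesis gives $|z|/\sqrt{t/\pi} = 1 + O((\log R)/R)$, hence $(|z|/\sqrt{t/\pi})^{|k|} = 1 + O((\log R)/R)$ for $|k|\le N$. Since $a_k$ is smooth and compactly supported, it is both bounded (absorbing the above multiplicative error) and Lipschitz, giving $|a_k(\sqrt{t/\pi}/R) - a_k(|z|/R)| = O((\log R)/R)$. Assembling these three estimates shows that each $k$-th term of $\psi(z,t,R)$ differs from the corresponding term of $\phi(z/R)$ by $O((\log R)/R)$, and summing the $2N+1$ indices completes the proof.

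There is no serious obstacle: the discrete polynomial approximation contributes only a subdominant $O(R^{-2})$ error, while the $(\log R)/R$ rate in the conclusion is driven by the radial mismatch $||z|-\sqrt{t/\pi}|\le C\log R$, which feeds through both the argument of $a_k$ and the factor $(|z|/\sqrt{t/\pi})^{|k|}$ in parallel. The only bookkeeping that requires care is the correspondence between the $k$ and $-k$ indices via $p_{-k} = \overline{p_k}$ and $\bar z = |z|e^{-i\theta}$, which ensures each $a_k e^{ik\theta}$ appears exactly once on each side of the comparison.
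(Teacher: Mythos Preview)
Your proof is correct and follows the same approach the paper sketches: smoothness of the $a_k$ handles the radial mismatch $||z|-\sqrt{t/\pi}|\le C\log R$, and the polynomial approximation $p_k\approx z^k$ handles the rest. The only cosmetic difference is that you invoke the sharper bound $|p_k(z)-z^k|\le C_k|z|^{k-2}$ from Lemma~\ref{discretepolynomial} (yielding an $O(R^{-2})$ contribution), whereas the paper's one-line justification cites the weaker $|p_k(z)-z^k|\le C|z|^{|k|-1}$; either suffices since the dominant error is the $(\log R)/R$ from the smoothness argument.
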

This lemma follows easily from the fact that the coefficients $a_k$ are smooth
and the bound $|p_k(z) - z^k| \le C|z|^{|k|-1}$.

\subsection{Van der Corput bounds}

\begin{lemma}\label{vandercorput}  (Van der Corput)
\begin{itemize}
\item[(a)] $|\# \{z \in \Z + i\Z: \pi |z|^2 \le t\} - t| \le C t^{1/3}$.
\item[(b)] For $k \ge 1$,
\[
t^{-k/2} \left| \sum_{ z\in \Z + i\Z} z^k \, 1_{\pi |z|^2 \le t}  \right| \le C t^{1/3}.
\]
\item[(c)] For $k \ge 1$,
\[
t^{-k/2} \left| \sum_{ z\in \Z + i\Z} p_k(z) \, 1_{\pi|z|^2 \le t}  \right| \le C t^{1/3}.
\]
\end{itemize}
\end{lemma}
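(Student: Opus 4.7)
The plan is to treat the three parts in order, with (a) as a classical citation, (b) as the main analytic step (handled by a symmetry reduction followed by Poisson summation), and (c) as a quick reduction to (b). Part (a) is Van der Corput's classical bound for the Gauss circle problem, so I would simply cite it (e.g.\ via \cite{IKKN}): in the scaling $t=\pi r^2$, the usual $O(r^{2/3})$ estimate becomes $O(t^{1/3})$.

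For part (b), the first observation is that the quarter-turn rotation $z\mapsto iz$ preserves both $\Z+i\Z$ and the disk $\{\pi|z|^2\leq t\}$ while multiplying $z^k$ by $i^k$. Hence $S_k(t):=\sum_z z^k 1_{\pi|z|^2\le t}$ satisfies $(1-i^k)S_k(t)=0$, which forces $S_k(t)=0$ unless $4\mid k$. In the remaining case $k\equiv 0\pmod 4$, I would apply Poisson summation to $f_r(z):=z^k 1_{|z|\le r}$ with $r=\sqrt{t/\pi}$. Passing to polar coordinates $z=\rho e^{i\phi}$, $\xi=|\xi|e^{i\alpha}$, the Jacobi--Anger expansion of $e^{-2\pi i\operatorname{Re}(\bar\xi z)}$ makes the $\phi$-integral select the single term with azimuthal index $-k$, and the standard Bessel identity $\int_0^r \rho^{k+1}J_k(a\rho)\,d\rho=r^{k+1}J_{k+1}(ar)/a$ gives the closed form
\[
\widehat{f_r}(\xi)\;=\;(-i)^k\, e^{ik\alpha}\,\frac{r^{k+1}J_{k+1}(2\pi|\xi|r)}{|\xi|},\qquad \xi\neq 0,
\]
while $\widehat{f_r}(0)=\int z^k 1_{|z|\le r}\,dA=0$ by angular symmetry. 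The asymptotic $|J_{k+1}(x)|=O(x^{-1/2})$ bounds each term crudely by $r^{k+1/2}/|\xi|^{3/2}$, and the upgrade to $O(r^{k+2/3})$ comes from exploiting the oscillation of $J_{k+1}(2\pi|\xi|r)$ in $|\xi|$ by the same Van der Corput argument that proves (a); the extra unimodular phase $e^{ik\alpha}$ does not interfere. A convenient way to make this rigorous is to convolve $1_{|z|\le r}$ with a bump on scale $r^{-1/3}$ so that the Poisson series converges absolutely, then correct the smoothing error using (a). Dividing by $t^{k/2}\asymp r^k$ gives the stated $O(t^{1/3})$ bound.

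Part (c) reduces to (b). By Lemma \ref{discretepolynomial}, $q_k(z)-z^k$ is a polynomial of degree at most $k-2$, and since $p_k(z)=q_k(z)-q_k(0)$ the difference $p_k(z)-z^k$ is also a polynomial of degree at most $k-2$. Thus $|p_k(z)-z^k|\le C(1+|z|^{k-2})$, so
\[
\Bigl|\sum_{z}\bigl(p_k(z)-z^k\bigr)1_{\pi|z|^2\le t}\Bigr|\;\le\; C\sum_{|z|\le r}(1+|z|^{k-2})\;=\;O(r^k)\;=\;O(t^{k/2}),
\]
which combined with (b) yields $t^{-k/2}\bigl|\sum_z p_k(z)1_{\pi|z|^2\le t}\bigr|=O(t^{1/3})+O(1)$, as required.

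The main obstacle is the oscillation estimate for $k\equiv 0\pmod 4$ in part (b): the termwise bound $r^{k+1/2}/|\xi|^{3/2}$ is not summable, and cancellation in $|\xi|$ must be extracted. However, once Poisson summation reduces the problem to a lattice sum against $J_{k+1}(2\pi|\xi|r)$, this is essentially a reprise of Van der Corput's classical argument for (a), and no genuinely new analytic input is needed beyond what is already imported from part (a).
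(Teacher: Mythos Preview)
Your proposal is correct and follows the same overall strategy as the paper: cite (a), prove (b) by Poisson summation after smoothing the indicator $1_{|z|\le r}$ with a bump on scale $r^{-1/3}$ and controlling the annular error via (a), then derive (c) from (b) using $|p_k(z)-z^k|\le C|z|^{k-2}$.

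The differences are in packaging rather than substance. You work in $d=2$ only and exploit the quarter-turn symmetry $z\mapsto iz$ to dispose of $k\not\equiv 0\pmod 4$ immediately; the paper does not use this shortcut and instead proves (b) uniformly in all dimensions for arbitrary homogeneous harmonic $P_k$. You compute the Fourier transform of $z^k 1_{|z|\le r}$ explicitly via the Jacobi--Anger expansion and the Bessel identity, obtaining $J_{k+1}$ and then invoking $|J_{k+1}(x)|=O(x^{-1/2})$; the paper instead writes $\hat P_k=P_k(i\partial_\xi)\delta$ and bounds $P_k(i\partial_\xi)\hat\chi(R\xi)$ by abstract stationary phase, yielding the same $|R\xi|^{-(d+1)/2}$ decay. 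After that, both arguments sum the smoothed Poisson series using the rapid decay of $\hat\rho_\epsilon$ to obtain $R^{-1-\alpha}$ with $\alpha=1-2/(d+1)$, which for $d=2$ is your $r^{k+2/3}$. One small comment: your sentence about ``exploiting the oscillation of $J_{k+1}(2\pi|\xi|r)$ in $|\xi|$'' is a red herring --- once you smooth on scale $r^{-1/3}$, the termwise bound $r^{k+1/2}|\xi|^{-3/2}$ together with the cutoff from $\hat\rho_\epsilon$ already sums to $O(r^{k+2/3})$, and no further cancellation in $\xi$ is needed; this is exactly how the paper closes the argument.
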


Part (a) of this lemma was proved by van der Corput in the 1920s (See \cite{GS10},
Theorem 87 p.~484).  Part (b) follows from the
same method, as proved below.  Part (c) follows from part (b) and
the stronger estimate of Lemma~\ref{discretepolynomial},
$|p_k(z) - z^k| \le C|z|^{k-2}$ for $k\ge2$ (and $p_1(z) -z = 0$).

We prove part (b) in all dimensions.  Let $P_k$ be a harmonic polynomial on $\R^d$
of homogeneous of degree $k$.
Normalize so that
\[
\max_{x\in B} |P_k(x)| =1
\]
where $B$ is the unit ball.  In this discussion $k$ will be fixed
and the constants are allowed to depend on $k$.

We are going to show that for $k\ge1$,
\[
\left|  \frac{1}{R^d} \sum_{|x|<R, \, x\in \Z^d} P_k(x)/R^k \right| \le R^{-1-\alpha}
\]
where
\[
\alpha = 1 - 2/(d+1)
\]
For $d=2$, $\alpha = 1/3$, and $R^d R^{-1-\alpha} = R^{2/3} \approx t^{1/3}$.  This
is the claim of part (b).


The van der Corput theorem is the case $k=0$.  It says
\[
(1/R^d)\abs{\#\set{x\in \Z^d: |x|<R} - \vol(|x|<R)}  \le R^{-1-\alpha}
\]
Let $\e = 1/R^\alpha$.


Consider $\rho$ a smooth, radial function on $\R^d$ with integral $1$
supported in the unit ball.   Then define $\chi = 1_B$
characteristic function of the unit ball.
Denote
\[
\rho_\e(x) = \e^{-d}\rho(x/\e), \quad
\chi_R(x) = R^{-d} \chi(x/R)
\]
Then
\[
\left|  \sum_{x\in \Z^d} (\chi_R*\rho_\e(x)-\chi_R(x))P_k(x)/R^k \right|
\le R^{-1-\alpha}
\]
This is because $\chi_R*\rho_\e(x) - \chi_R(x)$ is nonzero
only in the annulus of width $2\e$ around $|x|=R$ in which
(by the van der Corput bound) there are $O(R^{d-1}\e)$ lattice points.

The Poisson summation formula implies
\[
\sum_{x\in \Z^d} \chi_R*\rho_\e(x)P_k(x)/R^k
=
\sum_{\xi\in 2\pi\Z^d}[ \hat \chi_R(\xi) \hat\rho_\e(\xi)]*\hat P_k(\xi)/R^k
\]
in the sense of distributions.  The Fourier transform of a polynomial
is a derivative of the delta function,
$\hat P_k(\xi) = P_k(i\de_\xi)\delta(\xi)$.
Because $k\ge1$ and $P_k(x)$ is harmonic,
its average with repect to any radial function is zero. This is
expressed in the dual variable as the fact that when
$\xi=0$,
\[
P_k(i\de_\xi) (\hat \chi_R(\xi) \hat\rho_\e (\xi)]) = 0
\]
So we our sum equals
\[
\sum_{\xi\neq0, \,\xi\in 2\pi\Z^d}[ \hat\chi_R(\xi) \hat \rho_\e(\xi)]*\hat P_k (\xi)/R^k
\]
Next look at
\[
\hat \chi_R(\xi) = \hat\chi(R\xi)
\]
\[
P_k(i\de_\xi)\hat \chi(R\xi) = R^k \int_{|x|< 1} P_k(x)e^{-iRx\cdot\xi} dx
\]
All the terms in which fewer derivatives fall on $\hat\chi_R$ and more
fall on $\rho_\e$ give much smaller expressions:  the factor $R$ corresponding
to each such differentiation is replaced by an $\e$.

The asymptotics of this oscillatory integral above are well
known.  For any fixed polynomial $P$ they are of the same
order of magnitude as for $P\equiv 1$, namely
\[
|P_k(i\de_\xi)\hat\chi(R\xi)|/R^k \le C_k |R\xi|^{-(d+1)/2}
\]
This is proved by the method of stationary phase and can also
be derived from well known asymptotics of Bessel functions.

It follows that our sum is majorized by (replacing the letter $d$ by $n$ so that
it does not get mixed up with the differential $dr$)
\begin{align*}
\int_1^{\infty} (Rr)^{-(n+1)} \frac{r^{n-1}dr}{(1 + \e r)^N}
& \approx
\int_1^{1/\e} (Rr)^{-(n+1)} r^n \frac{dr}{r}\\
& \approx  R^{-(n+1)/2} \e^{-(n-1)/2} \\
&  = R^{-1 - \alpha}.
\end{align*}

\subsection{The main part of the proof}

Denote
\[
X_R = \frac{1}{R^2} \sum_{z\in (\Z + i\Z)/R}  L_0(Rz) \frac{\phi(z)}{|z|^2}.
\]
Applying the formula above for $L_0$,
\begin{align*}
X_R
& =
\sum_{z\in \Z + i\Z}  L_0(z) \frac{\phi(z/R)}{|z|^2}  \\
& =
\frac{1}{2\sqrt\pi} \int_0^\infty   \sum_{z\in \Z+i\Z} (1_{\pi |z|^2 \le t} - 1_{A_t})
\frac{\phi(z/R)}{|z|^2} t^{1/2}\frac{dt}{t} \\
& =
\frac{1}{2\sqrt{\pi}} \int_0^\infty   \sum_{z\in \Z+i\Z} (1_{\pi |z|^2 \le t} - 1_{A_t})
\frac{\psi(z,t, R)}{t/\pi} t^{1/2}\frac{dt}{t}  + E_R.
\end{align*}
To estimate the error term $E_R$,   note first that
the coefficients $a_k$ are supported in a fixed annulus,
the integrand above is supported in the range $c_1 R^2 \le  t \le c_2 R^2$.
Furthermore, by \cite{JLS}, there is an absolute constant $C$ such that for all sufficiently large $R$ and all $t$ in this range, the difference $\displaystyle  1_{\pi |z|^2 \le t} - 1_{A_t}$ is supported on the set of $z\in \Z^2$ such that $| |z| - \sqrt{t/\pi}| \le C \log R$.  Thus
\[
 \sum_{z\in \Z + i\Z} |1_{\pi |z|^2 \le t} - 1_{A_t}| \le K R \log R.
\]
Moreover,  Lemma \ref{discrete-error} applies and
\[
|E_R|  \le  C\int_{c_1R^2}^{c_2R^2} (R\log R) \frac{\log R}{R} t^{-1/2} \frac{dt}{t} = O((\log R)/R).
\]
Next, Lemma \ref{vandercorput}(a) says (since $\# A_t = t$)
\[
\left| \sum_{z\in \Z + i\Z} 1_{\pi |z|^2 \le t} - 1_{A_t}\right| \le  C t^{1/3}.
\]
Thus replacing $\psi$ by $\psi_0$ gives an additional error of size at most
\[
C\int_{c_1R^2}^{c_2R^2} t^{1/3} t^{-1/2} \frac{dt}{t} = O(R^{-1/3}).
\]
In all,
\begin{equation}\label{XRdiscrete}
X_R =
\frac{\sqrt\pi}{2} \int_0^\infty   \sum_{z\in \Z+i\Z} (1_{\pi |z|^2 \le t} - 1_{A_t})
\psi_0(z,t, R)  t^{-1/2}\frac{dt}{t}  + O(R^{-1/3})
\end{equation}

For $s = 0, 1, \dots$, consider the process
\[
M(s) = \frac{\sqrt\pi}{2} \int_0^\infty   \sum_{z\in \Z+i\Z} (1_{\pi |z|^2 \le t} - 1_{A_{s\wedge t}})
\psi_0(z,t, R) t^{-1/2}\frac{dt}{t}
\]
Note that $M(s)\to X_R$ as $s \to \infty$.   Note also that
Lemma \ref{vandercorput}(c) implies
\[
M(0) = O(R^{-1/3}).
\]
Because $p_k$ are discrete harmonic and $p_k(0)  =0$ for all $k\neq0$,
$M(s) - M(0)$ is a martingale.  It remains to show that
$M(s)-M(0) \longrightarrow N(0,V_0)$ in law.  As outlined below, this will follow from the martingale
central limit theorem (see, e.g., \cite{Brown,HH} or \cite[p.~414]{Durrett}).

For sufficiently large~$R$, the difference $M(s+1) - M(s)$ is nonzero only for $s$ in the range $c_1 R^2 \le s \le c_2 R^2$; and $|F_0(z) - \pi|z|^2| \le C R \log R$.  We now show that this implies
\begin{equation}\label{oneincrement}
|M(s+1) - M(s)|^2 = O(1/R^2)
\end{equation}
and
\begin{equation}\label{quad}
\sum_{s=0}^\infty |M(s+1) - M(s)|^2 = V_0 + O((\log R)/R)
\end{equation}
so that the martingale central limit theorem applies.

To prove \eqref{oneincrement}, observe that
\[
M(s+1)-M(s) =
-\frac{\sqrt\pi}{2} \int_{F_0(z)}^\infty \psi_0(z,t,R) t^{-1/2}\frac{dt}{t}
\]
where $z$ is the $(s+1)$th point of $A_t$.
Then
$|z| \le \sqrt{t/\pi} + K\log R$ implies $|p_k(z)| (t/\pi)^{-|k|/2} \le  C$,
and hence
\[
|\psi_0(z,t,R)| \le C
\]
Recalling that $\psi_0= 0$ unless $c_1R^2 \le t \le c_2 R^2$,
we have
\[
|M(s+1)-M(s)| \le C \int_{c_1R^2}^{c_2R^2}   t^{-1/2} \frac{dt}{t} = O(1/R)
\]
which confirms \eqref{oneincrement}.

Because $A_t$ fills the lattice $\Z + i\Z$ as $t\to \infty$, we have
\begin{align*}
\sum_{s=0}^\infty  & |M(s+1) - M(s)|^2   \\
&=
\sum_{z\in \Z+i\Z}
\left|
\frac{\sqrt\pi}{2} \int_{F_0(z)}^\infty \sum_{0 < |k| \le N}  a_k(\sqrt{t/\pi R^2}) p_k(z)(t/\pi)^{-|k|/2} t^{-1/2}\frac{dt}{t}
\right|^2.
\end{align*}
We prove \eqref{quad} in three steps:  replace $p_k(z)$ by $z^k$ (or $\bar z^{|k|}$ if $k<0$);
replace the lower limit $F_0(z)$ by $\pi |z|^2$; replace the sum of $z$ over lattice sites
with the integral with respect to Lebesgue measure in the complex $z$-plane.

We begin the proof of \eqref{quad} by noting that
the error term introduced by replacing $p_k$ with $z^k$ is
\[
|p_k(z) - z^k| (t/\pi)^{-|k|/2} \le  C_kt^{-1}  = O(1/R^2)
\]
In the integral this is majorized by
\[
\int_{c_1R^2}^{c_2R^2}   t^{-1/2} \frac{dt}{t}
\int_{c_1R^2}^{c_2R^2}   \frac{1}{R^2} t^{-1/2} \frac{dt}{t} = O(1/R^4)
\]
Since there are $O(R^2)$ such terms, this change contributes order
$R^2/R^4 = 1/R^2$ to the sum.

Next,  we change the lower limit from $F_0(z)$ to $\pi|z|^2$.
Since $|F_0(z) -\pi |z|^2| \le C R \log R$, the integral inside
$|\cdots|^2$ is changed by
\[
\int_{F_0(z)}^{\pi|z|^2}  1_{c_1 R^2 \le c_2 R^2}  t^{-1/2} \frac{dt}{t} = O((\log R)/R^2)
\]
Thus the change in the whole expression is majorized by the order of the cross term
\[
(1/R) (\log R)/R^2 = (\log R)/R^3
\]
Again there are $R^2$ terms in the sum over $z$, so the sum of the errors
is $O((\log R)/R)$.

Lastly, we replace the value at each site $z_0$ by the integral
\[
\int_{Q_{z_0}}
\left|
\frac{\sqrt\pi}{2} \int_{\pi r^2} ^\infty \sum_{0 < |k| \le N}  a_k(\sqrt{t/\pi R^2}) r^ke^{ik\theta} (t/\pi)^{-|k|/2} t^{-1/2}\frac{dt}{t}
\right|^2rdrd\theta
\]
where $Q_{z_0}$ is the unit square centered at $z_0$ and $z = r e^{i\theta}$.
Because the square has area $1$, the term in the lattice sum is the same as
this integral with $z=re^{i\theta}$ replaced by $z_0$ at each occurrence.
Since $|z-z_0| \le \sqrt2$,
\[
|z^k - z_0^k| \le 4k (|z| + |z_0|)^{k-1}   = O(R^{k-1})
\]
After we divide by $(\sqrt{t/\pi})^k$, the order of error is $1/R$.  Adding all
the errors contributes at most order  $1/R$ to the sum.  We must also
take into account the change in the lower limit of the integral, $\pi|z_0|^2$ is replaced
by $\pi|z|^2 = \pi r^2$.  Since $|z-z_0| \le \sqrt2$,
\[
||z|^2 - |z_0|^2| \le \sqrt2 (|z| + |z_0|)  \le CR
\]
Recall that in the previous step we previously changed the lower limit by $O(R\log R)$.
Thus by the same argument, this smaller change gives rise to an error of order $1/R$ in
the sum over $z_0$.

The proof of \eqref{quad} is now reduced to evaluating
\[
\int_0^{2\pi}\int_0^\infty
\left|
\frac{\sqrt\pi}{2} \int_{\pi r^2} ^\infty \sum_{0 < |k| \le N}  a_k(\sqrt{t/\pi R^2}) r^{|k|}e^{ik\theta} (t/\pi)^{-|k|/2} t^{-1/2}\frac{dt}{t}
\right|^2rdrd\theta
\]
Integrating in $\theta$ and changing variables from $r$ to $\rho = r/R$,
\[
= \frac{\pi^2}{2}
\sum_{0< |k| \le N}
\int_0^\infty \left|
\int_{\pi \rho^2R^2} ^\infty
a_k(\sqrt{t/\pi R^2}) (R\rho)^{|k|+1}  (t/\pi)^{-|k|/2} t^{-1/2}\frac{dt}{t}
\right|^2\frac{d\rho}{\rho}
\]
Then change variables from $t$ to to $r = \sqrt{t/\pi R^2}$ to obtain
\[
= 2\pi
\sum_{0< |k| \le N}
\int_0^\infty \left|
\int_{\rho} ^\infty
a_k(r)   (\rho/r)^{|k | + 1} \frac{dr}{r}
\right|^2\frac{d\rho}{\rho} = V_0.
\]
This ends the proof of Theorem \ref{gffdisk0}.

The proof of Theorem \ref{gffdisk1} follows the same idea.  We replace $A_t$ by
the Poisson time region $A_{T}$ (for $T = T(t)$), and we need to
find the limit as $R\to \infty$ of
\begin{align*}
\frac{\sqrt\pi}{2} \int_0^\infty   & (t - \#A_T) a_0(\sqrt{t/\pi R^2})  t^{-1/2}\frac{dt}{t}  \\
& +
\frac{\sqrt\pi}{2} \int_0^\infty   \sum_{z\in \Z+i\Z} (1_{\pi |z|^2 \le t} - 1_{A_T})
\psi_0(z,t, R)  t^{-1/2}\frac{dt}{t}
\end{align*}
The error terms in the estimation showing this quantity is within $O(R^{-1/3})$ of
\[
\frac{1}{R^2} \sum_{z\in (\Z + i\Z)/R}  L(Rz) \frac{\phi(z)}{|z|^2}
\]
are nearly the same as in the previous proof.  We describe briefly
the differences.  The difference between Poisson time and ordinary
counting is
\[
|\# A_T - \# A_t| =
|\# A_T - t| \le C t^{1/2} \log t = O(R \log R)  \quad \mbox{almost surely}
\]
if $t \approx R^2$.   It follows that  for $|z| \approx R$,
\[
|F(z) - \pi |z|^2| = O( R \log R) \quad \mbox{almost surely}
\]
as in the previous proof for $F_0(z)$.   Further errors are
also controlled since we then have the estimate analogous to the
one above for $A_t$, namely
\[
 \sum_{z\in \Z + i\Z} |1_{\pi |z|^2 \le t} - 1_{A_T}| \le  C R \log R
\]
We consider the continuous time martingale
\begin{align*}
M(s) &=
\frac{\sqrt\pi}{2} \int_0^\infty   (s\wedge t - \# A_{T(s \wedge t)}) a_0(\sqrt{t/\pi R^2})  t^{-1/2}\frac{dt}{t}  \\
& \quad +
\frac{\sqrt\pi}{2} \int_0^\infty   \sum_{z\in \Z+i\Z} (1_{\pi |z|^2 \le t} - 1_{A_{T(s\wedge t)}})
\psi_0(z,t, R)  t^{-1/2}\frac{dt}{t}
\end{align*}
Instead of using the martingale central limit theorem, we use the martingale
representation theorem.  This says
that the martingale $M(s)$ when reparameterized by its quadratic variation has the
same law as Brownian motion.   We must show that almost
surely the quadratic variation of $M$  on $0 \le s < \infty$ is $V + O(R^{-1/3})$.
\begin{align*}
\lim_{\epsilon \to 0}&  \EE ((M(s+\epsilon)- M(s))^2 | A_{T(s)})/\epsilon  \\
& =
\frac{1}{2\pi}
\int_0^{2\pi} \left|
\frac{\sqrt\pi}{2} \int_{s} ^\infty \sum_{ |k| \le N}  a_k(\sqrt{t/\pi R^2}) e^{ik\theta}) (s/t)^{|k|/2} t^{-1/2}\frac{dt}{t}
\right|^2d\theta \\
& \qquad \qquad + O(R^{-1/3})
\end{align*}
Integrating with respect to $s$ gives the quadratic variation $V + O(R^{-1/3})$ after
a suitable  change of variable as in the previous proof.
	
\subsection{Fixed time fluctuations: Proof of Theorem \ref{t.fluctuations}} \label{ss.fixedtimeproof}

Theorem \ref{t.fluctuations} follows almost immediately from the $d=2$ case of Theorem \ref{t.highdconvergence} and the estimates above.  Consider $(\phi, \tilde E_t)$ where $\tilde E_t$ is as in \eqref{e.et2}.  What happens if we replace $\phi$ with a function $\tilde \phi$ that is discrete harmonic on the rescaled mesh $m^{-1} \Z^d$ within a $\log m/m$ neighborhood of $B_1(0)$?  Clearly, if $\phi$ is smooth, we will have $\phi- \tilde \phi = O( m^{-1} \log m)$.  Since there are at most $O(m^{d-1} \log m)$ non-zero terms in \eqref{e.et2}, the discrepancy in \begin{equation} \label{e.Oet} (\phi, \tilde E_t) - (\tilde \phi, \tilde E_t)=  O\bigl( m^{-d/2} m^{d-1} (m^{-1} \log m) \log m\bigr) = O \bigl( m^{d/2-2} (\log m)^2 \bigr), \end{equation} which tends to zero as long as $d \in \{2,3\}$.

The fact that replacing $E_t$ with $\tilde E_t$ has a negligible effect follows from the above estimates when $d=2$.  This may also hold when $d=3$, but we will not prove it here.  Instead we remark that Theorem \ref{t.fluctuations} holds in three dimensions provided that we replace \eqref{e.et} with \eqref{e.et2}, and that the theorem as stated probably fails in higher dimensions even if we make a such a replacement.  The reason is that \eqref{e.et2} is positive at points slightly outside of $\B_r$ (or outside of the support of $w_t$) and negative at points slightly inside.  If we replace a discrete harmonic polynomial $\psi$ with a function that agrees with $\psi$ on $B_1(0)$ but has a different derivative along portions of $\partial B_1(0)$, this may produce a non-trivial effect (by the discussion above) when $d \geq 4$.

Finally, we note that replacing $\psi_m$ by $\psi$ introduces an error of order $m^{-2}$, and the same argument as above gives
\begin{equation} (\psi, \tilde E_t) - (\tilde \psi_m, \tilde E_t)=  O\bigl( m^{-d/2} m^{d-1} m^{-2} \log m\bigr) = O \bigl( m^{d/2-3} (\log m) \bigr), \end{equation}
which tends to zero when $d \in \{2,3,4,5\}$.

\end{document}